\newcommand{\F}{\mathbb{F}}
\newcommand{\G}{\mathbb{G}}
\renewcommand{\P}{\mathbb{P}}
\newcommand{\Q}{\mathbb{Q}}
\newcommand{\Z}{\mathbb{Z}}
\newcommand{\R}{\mathbb{R}}
\newcommand{\C}{\mathbb{C}}
\newcommand{\E}{\mathbb{E}}
\newcommand{\T}{\mathbb{T}}
\newcommand{\W}{\mathbb{W}}
\newcommand{\sC}{\mathcal{C}}
\newcommand{\sD}{\mathcal{D}}
\newcommand{\sE}{\mathcal{E}}
\newcommand{\sM}{\mathcal{M}}
\newcommand{\End}{\operatorname{End}}
\newcommand{\IM}{\operatorname{Im}}
\newcommand{\Coker}{\operatorname{Coker}}
\newcommand{\Tor}{{\operatorname{Tor}}}
\newcommand{\Div}{\operatorname{Div}}
\newcommand{\ord}{\operatorname{ord}}
\newcommand{\GL}{\operatorname{GL}}
\newcommand{\SL}{\operatorname{SL}}
\newcommand{\aug}{\operatorname{aug}}
\renewcommand{\deg}{\operatorname{deg}}
\renewcommand{\epsilon}{\varepsilon}
\renewcommand{\div}{\operatorname{div}}
\newcommand{\tJ}{{\widetilde{J}}}
\newcommand{\tM}{{\widetilde{M}}}
\newcommand{\tT}{{\widetilde{T}}}
\newcommand{\tC}{{\widetilde{\sC}}}
\newtheorem{lemma}{Lemma}[subsection]
\newtheorem{theorem}[lemma]{Theorem}
\newtheorem{prop}[lemma]{Proposition}
\newtheorem{proposition}[lemma]{Proposition}
\newtheorem{cor}[lemma]{Corollary}
\newtheorem{corollary}[lemma]{Corollary}
\theoremstyle{definition}
\theoremstyle{remark}
\newtheorem{remark}[lemma]{Remark}
\numberwithin{equation}{section}
\begin{document}

\title[Generalized modular and Drinfeld modular Jacobians]
{Rational torsion of generalized Jacobians\\
of modular and Drinfeld modular curves}

\author{Fu-Tsun Wei \and Takao Yamazaki}
\date{\today}

\address{Department of Mathematics, National Tsing Hua University, Taiwan}
\email{ftwei@math.nthu.edu.tw}

\address{Mathematical Institute, Tohoku University,
Aoba, Sendai 980-8578, Japan}
\email{ytakao@math.tohoku.ac.jp}

\begin{abstract}
We consider the generalized Jacobian $\widetilde{J}$
of the modular curve $X_0(N)$ of level $N$
with respect to a reduced divisor consisting of all cusps.
Supposing $N$ is square free,
we explicitly determine the structure of the
$\mathbb{Q}$-rational torsion points on $\widetilde{J}$
up to $6$-primary torsion. 
The result depicts a fuller picture than \cite{YY}
where the case of prime power level was studied.
We also obtain an analogous result for Drinfeld modular curves.
Our proof relies on  similar results for classical Jacobians
due to Ohta, Papikian and the first author.
We also discuss the Hecke action on  $\widetilde{J}$
and its Eisenstein property.
\end{abstract}

\keywords{Generalized Jacobian, Modular curves, Drinfeld modular curves, Cuspidal divisor group, Eisenstein ideal}
\subjclass[2010]{11G09 (11G18, 11F03, 14H40, 14G35)}
\thanks{The first author is supported by MOST Grant (105-2115-M-007-018-MY2 \& 107-2628-M-007-004-MY4). \indent The second author is supported by JSPS KAKENHI Grant (15K04773).}
\maketitle

\section{Introduction}

\subsection{Background and overview}
Let $J$ be the Jacobian variety
of the modular curve $X:=X_0(p)$ over $\Q$ of prime level $p$.
In a celebrated paper \cite{Mazur}, Mazur proved that 
the group of $\Q$-rational torsion points $J(\Q)_\Tor$ on $J$
is a cyclic group of order $(p-1)/(p-1, 12)$.
(Here and henceforth 
we denote by $(a, b)$ the greatest common divisor of $a$ and $b$.)
This result has been generalized 
to prime power level
by Lorenzini \cite{Lorenzini} and Ling \cite{Ling},
as well as 
to square free level
by Ohta \cite{Ohta}.
The latter result will be recalled in Theorem \ref{thm:ohta-pw} below.

A research toward a different direction was started in \cite{YY}.
Let $C$ be the set of all cusps on $X$, 
which we regard as a reduced effective divisor on $X$.
We consider the generalized Jacobian $\tJ$ of $X$
with respect to modulus $C$,
in the sense of Rosenlicht-Serre \cite{Serre}.
Since $C$ consists of two $\Q$-rational points in this case,
$\tJ$ is an extension of $J$ by $\G_m$
so that we have an exact sequence
(cf.\ \eqref{eq:delta})
\begin{equation}\label{eq:exact1} 
1 \to \{ \pm 1 \} \overset{i}{\to} \tJ(\Q)_\Tor \to J(\Q)_\Tor. 
\end{equation} 
In \cite{YY} it is shown that $i$ is an isomorphism.
It is also proved in loc.\ cit.\ that 
a similar bijectivity result holds
when the level is a power of a prime $\ge 5$,
conditional to a folklore conjecture
\lq\lq the cupsidal divisor classes cover all torsion rational points\rq\rq
(cf. \cite[Conjecture 2]{Ogg}).
On the contrary, it is observed that $i$ is far from being an isomorphism
when the level is a product of two different primes of certain type
\cite[Proposition 1.3.2]{YY}.

The purpose of the present article is twofold.
One is to clarify what happens in the case of square free level.
The other is to develop a parallel story 
for the rank two Drinfeld modular curves, again for square free level.
Our main result, explained in the next subsection,
pinpoints where $i$ fails to be an isomorphism (see Remark \ref{rem 1.2.3}).
Our proof relies on the study of classical Jacobians
due to Ohta \cite{Ohta} and to Papikian-Wei \cite{PW}.
We also discuss the action of Hecke operators on 
$\widetilde{J}(F)_\Tor$ and study its Eisenstein property,
see \S \ref{sec:intro-eis}.

\subsection{Main results}\label{sec:intro-sqfree}
We work in either of the following setting:
\begin{itemize}
\item[(NF)]
Set $F=\Q$ and $A=\Z$. 
Let $p_1, \dots, p_s$ be distinct primes, and put $N=p_1 \dots p_s \in A$.
Let $X:=X_0(N)$ be the modular curve with respect to 
$$\Gamma_0(N) = 
\left\{ \begin{pmatrix} a & b \\ c & d \end{pmatrix}
\in \SL_2(A)
\mid c \equiv 0 \bmod N \right\}.$$
\item[(FF)]
Let $\F_q$ be a finite field with $q$ elements. 
Set $F=\F_q(t)$ and $A=\F_q[t]$.
Let $p_1, \dots, p_s \in A$ be 
distinct irreducible monic polynomials, and put $N=p_1 \dots p_s \in A$.
Let $X:=X_0(N)$ be the rank two Drinfeld modular curve 
with respect to 
$$\Gamma_0(N) = 
\left\{ \begin{pmatrix} a & b \\ c & d \end{pmatrix}
\in \GL_2(A)
\mid c \equiv 0 \bmod N \right\}.$$
\end{itemize}

We regard $X$ as 
a smooth projective absolutely integral curve over $F$.
Let $C \subset X$ be the reduced closed subscheme consisting of all cusps on $X$.
All points of $C$ are $F$-rational and $|C|=2^s$.
As before, we denote by $J$ and $\tJ$
the Jacobian variety of $X$
and the generalized Jacobian of $X$ with modulus $C$,
respectively.
Then  $\tJ$ is an extension of $J$ by 
the product of $2^s-1$ copies of $\G_m$
so that we have an exact sequence
(cf. \eqref{eq:delta})
\begin{equation}\label{eq:exact2} 
1 \to \mu_{F}^{\oplus (2^s-1)} \overset{i}{\to} \tJ(F)_\Tor \to J(F)_\Tor,
\end{equation} 
where $\mu_F$ consists of the roots of unity in $F$.
Our result can be stated easily when $s=1$
(that is, $N=p_1 \in A$ is a prime element).

\begin{theorem}\label{thm:main1}
If $s=1$, 
then the map $i$ is an isomorphism.
\end{theorem}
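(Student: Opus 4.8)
The plan is to prove that the forgetful map $\tJ(F)_\Tor\to J(F)_\Tor$ of \eqref{eq:exact2} is zero; since $i$ is injective with image $\mu_F$ and $2^s-1=1$, this is equivalent to the assertion. In the defining sequence $1\to T\to\tJ\to J\to 0$ the torus is $T\cong\G_m$, so Hilbert 90 gives $H^1(F,T)=0$ and $\tJ(F)\to J(F)$ is surjective; every torsion point of $J$ thus admits a lift, and the question is whether any nonzero one admits a \emph{torsion} lift. By Mazur's theorem \cite{Mazur} in case (NF) and by \cite{PW} in case (FF), $J(F)_\Tor$ is cyclic of the order $n$ recalled above, generated by the class $x_0$ of the divisor $c_1-c_2$ on the two cusps. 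It therefore suffices to analyze the multiples of $x_0$. (In case (NF) the statement is already \cite{YY}; the argument below is uniform.)

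First I would make the lifting obstruction explicit. Identify $T(F)\cong F^\times$ by sending the class of $\div(f)$ to the ratio $f(c_1)/f(c_2)$ of the values at the two cusps (for $f$ prime to $C$), so that $i$ becomes the inclusion $\mu_F\hookrightarrow\tJ(F)$. For $x\in J(F)_\Tor$ of order $m$, choose a divisor $D$ prime to $C$ with class $x$, write $mD=\div(v)$, and set $u_x:=v(c_1)/v(c_2)\in F^\times$; then the class of $D$ in $\tJ(F)$ lifts $x$ and its $m$-th multiple equals $i(u_x)$. Hence $x$ lifts to a torsion point of $\tJ(F)$ if and only if $u_x\in\mu_F\cdot(F^\times)^m$. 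These data assemble into a homomorphism $\Psi\colon J(F)_\Tor\to F^\times\otimes_\Z(\Q/\Z)$, $x\mapsto u_x\otimes(1/m)$, and the image of $\tJ(F)_\Tor\to J(F)_\Tor$ is exactly the set of $x$ for which $\Psi(x)$ lies in the image of $\mu_F$. The theorem thus reduces to showing that this locus is trivial.

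The heart of the matter is to evaluate $\Psi(x_0)$, and here modular units enter. Let $h$ be the modular unit with $\div(h)=n(c_1-c_2)$: in case (NF) a suitable power of $\Delta(\tau)/\Delta(p\tau)$, and in case (FF) the analogous Drinfeld modular unit (as in \cite{PW}). Writing $D=(c_1-c_2)+\div(\gamma)$ one gets $v=h\gamma^n$ up to a constant, so that the $\gamma$-contribution is an $n$-th power and $u_{x_0}\equiv h(c_1)/h(c_2)\pmod{(F^\times)^n}$, the ratio of the leading coefficients of $h$ at the two cusps (leading, since $h$ vanishes there). Evaluating this ratio by means of the Fricke (Atkin--Lehner) involution interchanging the cusps gives, up to roots of unity, $u_{x_0}\equiv p^{12/(p-1,12)}$ in case (NF), and the corresponding power of the monic prime $p_1$ in case (FF).

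The main obstacle, and essentially the only computation, is this leading-coefficient evaluation together with the resulting non-power statement. Since $p$ is prime, $\mu_F=\{\pm1\}$, and $n=(p-1)/(p-1,12)$ is coprime to $12/(p-1,12)$, the class $\Psi(jx_0)=p^{12/(p-1,12)}\otimes(j/n)$ lies in the image of $\mu_F$ only when $n\mid j$; hence no nonzero element of $J(F)_\Tor$ lifts to a torsion point, and $i$ is an isomorphism (when $n=1$ there is nothing to prove). Case (FF) follows the same scheme verbatim, with $\F_q^\times$ in place of $\{\pm1\}$ and the order $n$ and unit $h$ supplied by \cite{PW}. I expect the genuinely delicate points to be the consistent normalization of the local parameters at the two cusps entering the leading-coefficient ratio, and, in case (FF), exhibiting the Drinfeld modular unit with divisor $n(c_1-c_2)$ and reading off its leading coefficients.
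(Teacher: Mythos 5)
Your proposal is correct and follows essentially the same route as the paper: your obstruction map $\Psi$ is exactly the connecting map $\delta$ of \eqref{eq:delta} with its leading-coefficient description (Lemma \ref{lem:delta}), your modular unit is the paper's $\Delta^e=\Delta(z)/\Delta(Nz)$ evaluated via the Atkin--Lehner transformation law (Lemma \ref{lem 4.1.1}), and your final coprimality step is the order comparison of Theorem \ref{thm:main3} against Theorem \ref{thm:ohta-pw2}(1). The only cosmetic difference is that the paper works with $\Delta^e$ of divisor $(|N|-1)(c_1-c_2)$ directly rather than extracting the root $h$ with divisor $n(c_1-c_2)$, which sidesteps your worry about exhibiting that unit explicitly.
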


As explained above, the case (NF) has been proved in \cite{YY}.
Note also that in the case (FF) 
P\'al showed
$J(F)_\Tor$ is a cyclic group
of order $q^d/(q^2-1, q^d-1)$ with $d=\deg(p)$ in  \cite{Pal}.

For general $s$, we need to introduce notations.
For $j \in \Z_{\ge 0}$ 
we define 
\begin{align}
\label{eq:def-Ej}
&\E_j := \left\{ e : \{ 1, \dots, s \} \to \{ \pm 1 \} 
~\middle|~
|e^{-1}(-1)| \ge j \right\},
\\
\label{eq:def-de)}
&\sM_j:= \bigoplus_{e \in \E_j} 
\left( \Z/d(e) \Z \right),
\quad d(e):=\prod_{i=1}^s(|p_i| + e(i)).
\end{align}
Here
$|e^{-1}(-1)|$ denotes the cardinality of 
$\{ c \in \{ 1, \dots, s \} \mid e(c)=-1 \}$,
and
in the case (FF) we write $|p_i|:=q^{\deg(p_i)}$.
We set 
\[
a=\begin{cases}
6 & \text{in the case (NF)},\\
q(q^2-1) & \text{in the case (FF)}.
\end{cases}
\]
The group of $F$-rational torsion points on $J$
is studied by Ohta and Papikian-Wei
(see Theorem \ref{thm:ohta-pw2} below for more detailed results).

\begin{theorem}[{Ohta \cite[Theorem (3.6.2)]{Ohta}, Papikian-Wei \cite[Theorem 4.3]{PW}}]
\label{thm:ohta-pw}
There is an isomorphism
\[ J(F)_\Tor \otimes \Z\left[ \frac{1}{a} \right]
\cong \sM_1 \otimes \Z\left[ \frac{1}{a} \right].
\]
\end{theorem}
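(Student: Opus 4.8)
The plan is to prove the isomorphism in two stages: first identify $J(F)_\Tor$ with the cuspidal divisor class group after inverting $a$, and then compute the latter explicitly. Write $\sC \subseteq J(F)$ for the subgroup generated by the classes of degree-zero divisors supported on the cusps $C$. Since every cusp is $F$-rational, $\sC$ is automatically a finite subgroup of $J(F)_\Tor$, giving an inclusion $\sC \otimes \Z[1/a] \subseteq J(F)_\Tor \otimes \Z[1/a]$. The real content of the first stage is the reverse inclusion: away from $a$, every $F$-rational torsion point is cuspidal.

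For this reverse inclusion I would invoke the Eisenstein-ideal machinery. A nonzero rational torsion point of order $\ell$ provides a Galois-fixed line in $J[\ell]$, so the associated residual representation is reducible and the corresponding maximal ideal of the Hecke algebra is Eisenstein; one then shows that the $\ell$-part of the Eisenstein component of $J(F)_\Tor$ is already accounted for by cusps. In case (NF) this is precisely Ohta's argument, and the primes $2,3$ must be inverted because the comparison between the Eisenstein ideal and the cuspidal group degenerates there; in case (FF) it is the parallel theorem of Papikian--Wei for Drinfeld modular curves, whose Eisenstein-ideal analysis forces one to invert $q(q^2-1)$. I would simply cite these results rather than reprove them, since this is the deepest input.

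The second stage is the structure computation for $\sC \otimes \Z[1/a]$, and here the Atkin--Lehner involutions $W_{p_1}, \dots, W_{p_s}$ are the organizing tool. For squarefree $N$ the $2^s$ cusps are indexed by the divisors of $N$, equivalently by the functions $e \colon \{1,\dots,s\} \to \{\pm 1\}$, and the group $(\Z/2\Z)^s$ generated by the $W_{p_i}$ acts on the cusps by coordinatewise sign flips, that is, through the regular representation. Its characters are precisely the $e$, so over $\Z[1/a]$ (which inverts $2$) the degree-zero cuspidal group decomposes into eigenspaces indexed by the nontrivial characters---this is exactly why the index set is $\E_1$, the trivial character $e \equiv +1$ being cut out by the degree map. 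For each $e \in \E_1$ I would evaluate the divisors of an explicit generating family of modular units (eta-quotients in case (NF), their Drinfeld analogues arising from the discriminant function in case (FF)) on the cusps, project onto the $e$-eigenspace, and read off that the eigenspace is cyclic of order $d(e) = \prod_i(|p_i| + e(i))$ up to a factor supported at $a$. Assembling the eigenspaces yields $\sC \otimes \Z[1/a] \cong \sM_1 \otimes \Z[1/a]$, which combined with the first stage gives the theorem.

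The hard part is the first stage: proving that rational torsion is cuspidal away from $a$ is exactly the depth of Mazur's method and its generalizations, whereas the second stage, though laborious, is an essentially explicit linear-algebra computation over $\Z[1/a]$ once the modular units and the Atkin--Lehner decomposition are in hand.
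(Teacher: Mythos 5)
Your proposal is correct and matches the treatment in the paper: Theorem \ref{thm:ohta-pw} is quoted from Ohta and Papikian--Wei, not reproved, and your two-stage outline (rational torsion equals the cuspidal group away from $a$ via the Eisenstein-ideal argument, then the Atkin--Lehner eigenspace computation of $\sC$ via $\Delta^e$ with $\div(\Delta^e)=d(e)D^e$) is exactly how those references establish it, consistent with the refined statement recorded in Theorem \ref{thm:ohta-pw2}. Since you, like the paper, ultimately defer the deep first stage to the same citations, there is no substantive difference in approach.
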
 

Our main result for general $s$ is the following:

\begin{theorem}\label{thm:main2}
There is an isomorphism
\[ \tJ(F)_\Tor \otimes \Z\left[ \frac{1}{a} \right]
\cong \sM_2 \otimes \Z\left[ \frac{1}{a} \right].
\]
\end{theorem}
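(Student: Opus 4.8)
The plan is to convert Theorem~\ref{thm:main2} into the evaluation of a single boundary homomorphism, and then to compute that homomorphism on the cuspidal classes already underlying Theorem~\ref{thm:ohta-pw}.

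\emph{Reduction to a boundary map.} Write the defining extension of algebraic groups as $0\to T\to\tJ\to J\to 0$, where $T\cong\G_m^{\oplus(2^s-1)}$ is split because all cusps are $F$-rational. By Hilbert~90 the sequence of $F$-points $0\to T(F)\to\tJ(F)\to J(F)\to 0$ is exact. Fix $n$ coprime to $a$. In case (NF) we have $\mu_n(F)\subseteq\{\pm1\}$ and $n$ is odd, so $\mu_n(F)=1$; in case (FF) we have $|\mu_F|=q-1$ which divides $a$, so again $\mu_n(F)=1$. In either case $T(F)[n]=0$, and the snake lemma for multiplication by $n$ yields
\[ 0 \to \tJ(F)[n] \to J(F)[n] \by{\partial_n} T(F)/nT(F), \]
so that $\tJ(F)[n]\cong\ker(\partial_n)$. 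Since $\mu_F\otimes\Z[1/a]=0$, the sequence \eqref{eq:exact2} identifies $\tJ(F)_\Tor\otimes\Z[1/a]$ with its image in $J(F)_\Tor\otimes\Z[1/a]$, which is $\bigcup_n\ker(\partial_n)$. Hence it suffices to prove $\ker(\partial_n)\cong\sM_2[n]$, compatibly in $n$, for every $n$ coprime to $a$.

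\emph{Explicit shape of $\partial_n$.} By Theorem~\ref{thm:ohta-pw} each $P\in J(F)[n]$ is represented by a divisor $D$ supported on the cusps with $nD=\div(u)$ for a modular unit $u$. Lifting $P$ to $\tJ$ by a divisor $D'$ prime to $C$ with $[D']=[D]$, one gets $nD'=\div(f)$ with $f=u\,g^n$ for a suitable $g$; the presentation of $\tJ$ recalled in \eqref{eq:delta} — the class of a principal divisor prime to $C$ is the tuple of its values at the cusps, modulo constants — then gives
\[ \partial_n(P) \;=\; \big[(u_c^*)_{c\in C}\big] \;\in\; T(F)/nT(F), \]
where $u_c^*\in F^\times$ is the leading coefficient of $u$ at the cusp $c$ with respect to a local uniformizer. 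The factor $g^n$ and the choice of uniformizer alter $(u_c^*)_c$ only by $n$-th powers, so $\partial_n$ is well defined; it is the \emph{leading-coefficient map} of modular units, read modulo constants and $n$-th powers.

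\emph{Eigenspace reduction.} The extension, and hence $\partial_n$, is equivariant for the Atkin--Lehner involutions $w_{p_1},\dots,w_{p_s}$, which generate a group $(\Z/2)^s$ permuting the $2^s$ cusps. Thus the cusps, $J(F)_\Tor\otimes\Z[1/a]$ and the cocharacter lattice of $T$ all split into joint eigenspaces indexed by sign vectors $e$: in Theorem~\ref{thm:ohta-pw} the summand $\Z/d(e)$ is exactly the $e$-eigenspace, and after inverting $a$ the lattice of $T$ has a rank-one $e$-component precisely for $e\in\E_1$. Since $\partial_n$ preserves eigenspaces, $\ker(\partial_n)=\bigoplus_{e\in\E_1}\ker(\partial_n|_e)$, and Theorem~\ref{thm:main2} becomes the assertion that $\partial_n|_e$ is injective when $e$ has exactly one $-1$ and vanishes when $e$ has at least two $-1$'s.

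\emph{Main obstacle.} This dichotomy is the heart of the matter, and it is where the explicit theory of modular units enters: one must compute the leading coefficients $u_c^*$ at every cusp for generators of each eigenspace --- eta-quotients or Siegel units in case (NF), and the Drinfeld discriminant units of Gekeler and Papikian--Wei in case (FF) --- and follow them modulo constants and $n$-th powers. I expect that, in the $e$-eigenspace, the class $(u_c^*)_c$ is up to $n$-th powers a product of powers of $p_1,\dots,p_s$ whose exponents are divisible by $n$ exactly when $e$ has at least two sign changes, whereas a single sign change at $k$ produces a coset whose order in $F^\times/(F^\times)^n$ matches that of the cyclic source $\Z/d(e)\,[n]$, forcing injectivity. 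The principal difficulty is carrying out this leading-coefficient computation uniformly in both settings and matching the resulting valuations with the orders $d(e)$; the case $s=1$ of Theorem~\ref{thm:main1}, where $\E_2=\varnothing$ and the unique eigenspace is indeed killed, serves as a consistency check.
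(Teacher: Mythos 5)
Your reduction is correct and is in fact the same route the paper takes: the boundary map $\partial_n$ you construct is (the $n$-torsion piece of) the connecting map $\delta$ in \eqref{eq:delta}, your leading-coefficient description of it is Lemma \ref{lem:delta}, and the Atkin--Lehner eigenspace splitting together with Theorem \ref{thm:ohta-pw2} reduces everything to exactly the dichotomy you state: $\delta$ restricted to the $e$-eigenspace is injective (after inverting $a$) when $|e^{-1}(-1)|=1$ and zero when $|e^{-1}(-1)|\ge 2$. Your bookkeeping of why $\mu_n(F)=1$ for $(n,a)=1$ and why the cocharacter lattice of the torus contributes a rank-one piece for each $e\neq 1_\E$ is also sound.

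However, the proposal stops at precisely the point where the proof actually lives. The paragraph labelled \emph{Main obstacle} is not an argument but a conjecture of the answer (``I expect that\dots''), and that expected answer is the entire quantitative content of the theorem; it is what the paper isolates as Theorem \ref{thm:main3} and proves in \S\ref{sec:pf}. Concretely, what is missing is: (i) the choice of unit generating the $e$-eigenspace, namely $\Delta^e(z)=\prod_{w\in\W}\Delta(m(w)z)^{\langle e,w\rangle}$ with $\div(\Delta^e)=d(e)D^e$; (ii) the transformation law $\Delta^e(W_wz)=c(e,w)\,\Delta^e(z)^{\langle e,w\rangle}$, where the constant $c(e,w)$ is computed from the character sum $\sum_{w'\in\W,\,w'_i=1}\langle e,w'\rangle$, which equals $2^{s-1}$ if $e=e^{(i)}$ and $0$ otherwise --- this character-sum identity is the actual source of the ``one sign change versus at least two'' dichotomy, and nothing in your write-up points at it; and (iii) the normalization of uniformizers $t_{[w]}=t_{[w_\infty]}\circ W_{\bar w}$ that lets one read off all leading coefficients from the single cusp at infinity, yielding $\delta(D^{e^{(i)}})=D^{e^{(i)}}\otimes p_i\otimes(-2^{s-2}k/d(e^{(i)}))$ and hence the order $d(e^{(i)})/(d(e^{(i)}),2^{s-1}k)$, whose $\ell$-part for $\ell\nmid a$ matches the order of the source and forces injectivity. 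Without (i)--(iii) the proof is a correct reduction plus an unproven claim, so as it stands it does not establish Theorem \ref{thm:main2}.
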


\begin{remark}\label{rem 1.2.3}
Both $J$ and $\tJ$ admit an action of $\W:=(\Z/2\Z)^s$ 
through the Atkin-Lehner involutions.
We identify $\E_0$ with the character group of $\W$.
Theorems \ref{thm:ohta-pw} and \ref{thm:main2} actually
describe the decomposition of 
$M:=J(F)_\Tor \otimes \Z[1/a]$ and $\tM:=\tJ(F)_\Tor \otimes \Z[1/a]$
according to characters of $\W$.
If we write $\tM^e$ and $M^e$ for the $e$-part
of $\tM$ and $M$ for $e \in \E_0$,
then they say
\begin{itemize}
\item 
$\tM^e=M^e=0$
if $e=1_{\E_0}$;
\item 
$\tM^e=0$
and 
$M^e
\cong (\Z/d(e)\Z) \otimes \Z[1/a]$
if $|e^{-1}(-1)|=1$;
\item 
$\tM^e
\cong M^e
\cong (\Z/d(e)\Z) \otimes \Z[1/a]$
if $|e^{-1}(-1)|\ge 2$.
\end{itemize}
The referee pointed out that,
in view of \cite[Remark 4.4]{PW},
$\tM^e$ admits an interpretation as 
the kernel of the specialization map
$M^e \to \prod_{i=1}^s \Phi_{p_i}$,
where $\Phi_{p_i}$ is the group of components
of the reduction of $J_0(N)$ at $p_i$.
A direct proof of this statement 
would possibly lead to a more geometric proof
of Theorem \ref{thm:main2}.
\end{remark}

\begin{remark}
In Theorems \ref{thm:ohta-pw} and \ref{thm:main2},
it is sometimes possible to describe
the $3$-part (resp. $(q+1)$-part) 
in the case (NF) (resp.\ (FF)),
see Theorem \ref{thm:ohta-pw2} and Corollary \ref{cor:main3}.
\end{remark}

\subsection{Eisenstein property}\label{sec:intro-eis}
Let $\T$ be the $\Z$-algebra generated by the Hecke correspondences 
$\tau_p$ on $X$ for all $p \nmid N$. 
Then $J$ and $\widetilde{J}$ admit $\T$-module structures, 
and the natural homomorphism $\widetilde{J} \rightarrow J$ 
is actually $\T$-equivariant. 
Let $\sE$ be the ideal of $\T$
generated by $\tau_p -|p|-1$ for all $p \nmid N$.
In the case of (NF),
we also use a little smaller ideal $\sE'$ 
generated by $\tau_p -|p|-1$ for all $p \nmid 2N$.
We have the following  results in the literatures
(cf.\ Lemma \ref{lem:eis-j}).
\begin{itemize}
\item If $s=1$, then $J(F)_\Tor$ is annihilated by $\sE$.
\item In (NF), $J(F)_\Tor$ is annihilated by $\sE'$.
\item In (FF), $J(F)_\Tor \otimes \Z[1/q]$ is annihilated by $\sE$.
\end{itemize}
They played a fundamental role 
in the works of Mazur, P\'al, Ohta and Papikian-Wei
(cf.\ \cite[Proposition 11.1]{Mazur},
\cite[Lemma 7.16]{Pal},
\cite[p.\ 316]{Ohta},
\cite[Lemma 7.1]{PW2}).
We now ask an analogous question for $\tJ(F)_\Tor$.
In \S \ref{sect:eis},
we will study this problem and prove the following results
in Proposition \ref{P-Eisenstein}
(see also Corollary \ref{cor:prime-eis}).

\begin{prop}
${}$
\begin{enumerate}
\item
If $s=1$, then $\tJ(F)_\Tor$ is annihilated by $\sE$.
\item
In {\rm (NF)},
$\tJ(F)_\Tor$ is annihilated by ${\sE'}^2$.
\item
In {\rm (FF)},
$\tJ(F)_\Tor \otimes \Z[1/q]$ is annihilated by $\sE^2$.
\end{enumerate}
\end{prop}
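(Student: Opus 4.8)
The plan is to exploit the $\T$-equivariant exact sequence \eqref{eq:exact2} and to reduce the Eisenstein property of $\tJ(F)_\Tor$ to separate statements about its toric and abelian parts, the latter being already supplied by Lemma \ref{lem:eis-j}. Write $T$ for the toric part of $\tJ$, a split torus with character group $\Div^0(C)$, the degree-zero divisors supported on the cusps, so that $T(F)=\Hom(\Div^0(C),F^*)$ and $T(F)_\Tor=\Hom(\Div^0(C),\mu_F)\cong\mu_F^{\oplus(2^s-1)}$. The first task is to check that for $p\nmid N$ the correspondence $\tau_p$ is defined on $\tJ$ and that \eqref{eq:exact2} is $\T$-equivariant; this holds because the two degeneracy maps defining $\tau_p$ carry cusps to cusps when $p\nmid N$, hence respect the modulus $C$, restrict to an endomorphism of $T$, and induce on $\Div^0(C)$ the usual Hecke action on cuspidal divisors.

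The heart of the argument is the computation that for every $p\nmid N$ the operator $\tau_p$ acts on $\Div(C)$, and therefore on $\Div^0(C)$, as multiplication by $|p|+1$. First I would recall that the cusps are indexed by the divisors of $N$ and that the Atkin--Lehner group $\W$ permutes them simply transitively and commutes with $\tau_p$; then a direct computation with the degeneracy maps $\alpha,\beta$ from level $Np$ (above each cusp $P_d$ the map $\alpha$ contributes ramification $1$ and $p$, while $\beta$ returns both preimages to $P_d$) shows $\tau_p(P_d)=(|p|+1)P_d$. In case (NF) this is equivalently the statement that the $2^s-1$ weight-two Eisenstein series for $\Gamma_0(N)$ all have $\tau_p$-eigenvalue $|p|+1$. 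Passing to $T(F)_\Tor=\Hom(\Div^0(C),\mu_F)$, the operator $\tau_p-|p|-1$ acts as $0$; thus $\sE$ annihilates $T(F)_\Tor$ for \emph{all} $p\nmid N$, with no exceptional primes and, in case (FF), with no need to invert $q$, since $\mu_F$ has order prime to $q$.

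Granting this, the three assertions follow quickly. For (1) I would invoke Theorem \ref{thm:main1}: when $s=1$ the map $i$ is an isomorphism, so $\tJ(F)_\Tor=T(F)_\Tor$, which the previous step shows is annihilated by $\sE$. For (2) and (3) I would use the elementary fact that in a short exact sequence $0\to A\to B\to C\to 0$ of $\T$-modules with $IA=0$ and $JC=0$ for ideals $I,J\subseteq\T$, the product $IJ$ annihilates $B$. Applying this to \eqref{eq:exact2} with $A=T(F)_\Tor$, $B=\tJ(F)_\Tor$ and $C=\IM(\tJ(F)_\Tor\to J(F)_\Tor)$: in case (NF), $I=\sE$ kills $A$ and $J=\sE'$ kills $C\subseteq J(F)_\Tor$ by Lemma \ref{lem:eis-j}, so $\sE\sE'$, and a fortiori ${\sE'}^2$ since $\sE'\subseteq\sE$, annihilates $\tJ(F)_\Tor$; in case (FF) the same argument with $I=J=\sE$ carried out after inverting $q$ gives that $\sE^2$ annihilates $\tJ(F)_\Tor\otimes\Z[1/q]$. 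The appearance of the \emph{square} reflects exactly that I do not attempt to split the extension \eqref{eq:exact2}.

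The main obstacle will be the cusp computation of the second paragraph, together with the functoriality identifying the Hecke action on $T$ with that on $\Div^0(C)$. In case (NF) this rests on the classical description of the cusps of $X_0(N)$ and their Eisenstein eigenvalues. In case (FF) it requires the analogous, and genuinely more delicate, analysis of cusps and Hecke operators on Drinfeld modular curves, where one must also control the behaviour in residue characteristic and confirm that the toric contribution stays prime to $q$, so that no inversion beyond the one already forced by the $J$-side of Lemma \ref{lem:eis-j} is needed.
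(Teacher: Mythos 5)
Your proof is correct, and for parts (2) and (3) it follows essentially the same route as the paper: the $\T$-equivariance of the exact sequence \eqref{eq:exact2}, the fact that the toric part $\sD_3\otimes\mu_F$ is annihilated by the full ideal $\sE$, Lemma \ref{lem:eis-j} for the $J$-side, and the elementary observation that an extension of a $J$-killed module by an $I$-killed module is killed by $IJ$. There are two genuine differences. First, for part (1) the paper does not invoke Theorem \ref{thm:main1}; it observes instead that $\sC=J(F)_\Tor$ when $s=1$ (Theorem \ref{thm:ohta-pw2}(1)), hence $\tJ(F)_\Tor\subset\tC$, and then proves that $\tC$ itself is Eisenstein for $s=1$ --- which in case (NF) requires a separate argument at $p=2$ because the local formula of Lemma \ref{finallemma} carries the sign $(-1)^{(|p|+1)\ord_{[w]}(\alpha)}$. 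Your route via Theorem \ref{thm:main1}, so that $\tJ(F)_\Tor$ \emph{equals} the toric torsion when $s=1$, is shorter and cleanly sidesteps that sign issue, since the sign is trivial on constants; there is no circularity, as Theorem \ref{thm:main1} is proved independently of any Eisenstein considerations. Second, you verify the Eisenstein property of the torus by the combinatorial computation $\tau_p(P_d)=(|p|+1)P_d$ on cuspidal divisors plus duality, whereas the paper deduces it from the finer local computation of $g_*f^*$ on $F(X)^\times_{[w]}/U^{(1)}_{[w]}$ in Lemma \ref{finallemma}; the paper notes the direct argument also works, but its stronger lemma is what powers Proposition \ref{prop:sc-eisen} and Corollary \ref{cor:prime-eis}, which your argument does not recover. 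One small point to make explicit when passing from divisors to the torus: $g_*$ on local leading coefficients is a norm, so a constant $c$ at $[w]$ goes to $c^{\,d_{[w]'}+d_{[w^*]'}}=c^{\,|p|+1}$; this is precisely where the absence of a sign (and hence of exceptional primes on the toric part) is actually checked.
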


\subsection{Organization of the paper}
We recall known facts and prove easy lemmas on 
Jacobian varieties (resp.\ generalized Jacobians)
of (Drinfeld) modular curves
in \S \ref{sec:jac} (resp.\ \S \ref{sec:genjac}).
The proofs of
Theorems \ref{thm:main1} and \ref{thm:main2}
are reduced to a key technical result Theorem \ref{thm:main3},
whose proof occupies \S \ref{sec:pf}.
We discuss the Hecke action and the Eisenstein property of $\tJ(F)_\Tor$
in \S \ref{sect:eis}.

\section{Jacobian varieties}\label{sec:jac}
\subsection{Structure of cusps}
We continue to use the notation introduced in \S \ref{sec:intro-sqfree}.
Put $A_+=\Z_{>0}$ in the case (NF),
and let $A_+ \subset A$
be the set of all monic polynomials in the case (FF).
Recall that $C \subset X=X_0(N)$ denotes
the set of all cusps,
which admits a standard description
\begin{equation}
\label{eq:cusp1}
C \cong \Gamma_0(N) \backslash \P^1(F).
\end{equation}
For $x \in \P^1(F)$ we denote by $[x] \in C$
the point corresponding to the $\Gamma_0(N)$-orbit of $x$.
We shall constantly use two bijections
\begin{equation}\label{eq:w-a-c}
\W := (\Z/2\Z)^s
\overset{m}{\longrightarrow} 
\{ m \in A_+ \mid m|N \}
\overset{[(-)^{-1}]}{\longrightarrow} 
C .
\end{equation}
Here 
the first map is given by
$m(w)=\prod_{i=1}^s p_i^{\tilde{w}_i}$
for $w=(w_i)_{i=1}^s \in \W$,
where for $x \in \Z/2\Z$ we write
$\tilde{x}=0 \in \Z$ if $x=0$
and $\tilde{x}=1 \in \Z$ if $x=1$.
Note that $m$ becomes an isomorphism of groups
if we equip a group structure on 
$\{ m \in A_+ \mid m|N \}$
by $m * m' = mm'/(m, m')^2$.
The second map is given by 
sending $m$ to $[1/m]$.
We shall abbreviate $[w]:=[1/m(w)]$ for $w \in \W$. 

We identify $\E:=\{ \pm 1 \}^s$  
with $\E_0$ from \eqref{eq:def-Ej}.
Let
\[ \langle , \rangle : \E \times \W \to \{ \pm 1 \} \]
be the following canonical biadditive pairing
$$\langle e, w \rangle = \prod_{i=1}^s e_i^{w_i} \in \{ \pm 1 \} \quad 
\text{for } e=(e_i)_{i=1}^s \in \E, ~w=(w_i)_{i=1}^s \in \W.
$$
Given $e=(e_i)_{i=1}^s \in \E$, we define
\begin{equation}\label{eq:def-De}
D^e := \sum_{w \in \W} \langle e, w \rangle[w] \in \Div(X).
\end{equation}
The degree of $D^e$ is zero if $e \not=1_\E$,
and is $2^s$ if $e=1_\E$.

\subsection{Atkin-Lehner involution}
For each $m \in A_+$ with $m|N$,
we let $W_m : X \to X$
be the Atkin-Lehner involution associated to $m$
(cf.\ \cite[(1.1.5)]{Ohta} in the case (NF), 
\cite[Definition 2.11]{PW} in the case (FF)).
To ease the notation, we write $W_w =W_{m(w)}$ for $w \in \W$.
Recall that $W_m$ restricts to 
an $F$-automorphism of $C$. 
\begin{lemma}\label{lem:d12}
\begin{enumerate}
\item
For $w, w' \in \W$ and $e \in \E \setminus \{ 1_\E \}$,
we have
\begin{align*}
W_w([w'])=[w+w'],
\quad
W_w(D^e)=\langle e, w \rangle D^e.
\end{align*}
\item 
Let us define subgroups of $\Div(X)$ by
\[ \sD_1:=\bigoplus_{e \in \E \setminus \{ 1_\E \}} \Z D^e
  \subset
   \sD_2:= \sD \cap \Div^0(X)
  \subset
   \sD:= \bigoplus_{w \in \W} \Z [w],
\]
which are all stable under the action of $\W$.
Then the index $[\sD_2 : \sD_1]$ is given by $2^{(2^{s-1}-1)s}$.
\end{enumerate}
\end{lemma}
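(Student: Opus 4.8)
The plan is to handle (1) first, since (2) builds directly on it. For the first formula in (1), I would invoke the explicit description of the Atkin--Lehner involutions on cusps. Because $N$ is square free, every divisor $m \mid N$ satisfies $(m, N/m)=1$, so each $W_m$ is a genuine Atkin--Lehner involution, and the standard computation of its effect on cusps (\cite[(1.1.5)]{Ohta} in case (NF), \cite[Definition 2.11]{PW} in case (FF)) shows that under the identification $C \cong \W$ of \eqref{eq:w-a-c} the map $W_w$ becomes translation by $w$; that is, $W_w([w'])=[w+w']$. Granting this, the second formula is a direct manipulation: writing $W_w(D^e)=\sum_{w'}\langle e,w'\rangle[w+w']$ and reindexing by $w''=w+w'$ (so $w'=w''+w$, using $-w=w$ in $\W$), biadditivity gives $\langle e,w'\rangle=\langle e,w''\rangle\langle e,w\rangle$, and factoring out $\langle e,w\rangle$ recovers $\langle e,w\rangle D^e$.

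For (2), stability under $\W$ is then immediate: $W_w$ permutes the cusps, hence preserves $\sD$ and, being degree preserving, its degree-zero part $\sD_2$, while part (1) shows $W_w(D^e)=\pm D^e$, so $\sD_1$ is preserved as well.

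The substantive point is the index $[\sD_2 : \sD_1]$. I would introduce the auxiliary lattice $\sD':=\bigoplus_{e\in\E}\Z D^e$ (now including $e=1_\E$) and the $2^s\times 2^s$ matrix $H=(\langle e,w\rangle)_{e,w}$ expressing the $D^e$ in the basis $([w])_{w\in\W}$. Orthogonality of characters yields $HH^{\mathrm t}=2^s I$, whence $\det(H)^2=(2^s)^{2^s}$ and $[\sD:\sD']=|\det H|=2^{s2^{s-1}}$. It remains to pass from $[\sD:\sD']$ to $[\sD_2:\sD_1]$, and here I would use the degree map. Since $\deg D^e=0$ for $e\neq 1_\E$ and $\deg D^{1_\E}=2^s$, one has $\deg(\sD')=2^s\Z$ and $\sD'\cap\sD_2=\sD_1$, while $\deg(\sD)=\Z$. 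Comparing the two short exact sequences $0\to\sD_1\to\sD'\by{\deg}2^s\Z\to 0$ and $0\to\sD_2\to\sD\by{\deg}\Z\to 0$ via the snake lemma (all vertical maps injective) gives $0\to\sD_2/\sD_1\to\sD/\sD'\to\Z/2^s\Z\to 0$, so $[\sD_2:\sD_1]=[\sD:\sD']/2^s=2^{s2^{s-1}-s}=2^{(2^{s-1}-1)s}$, as claimed.

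The only place demanding genuine care is the first formula of (1): the precise normalization of the Atkin--Lehner action on cusps must be checked to match the group law $m*m'=mm'/(m,m')^2$ in both the number field and function field settings. Once that identification is pinned down against the cited references, the rest is formal — a Hadamard determinant together with a snake-lemma bookkeeping of the degree map.
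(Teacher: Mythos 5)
Your proposal is correct and follows essentially the same route as the paper: part (1) is handled by the same appeal to the explicit Atkin--Lehner action on cusps (the paper cites \cite[Proposition 4.2]{PW} for the second identity, which you instead rederive by the reindexing $w''=w+w'$), and part (2) uses the identical comparison of the two short exact sequences $0\to\sD_1\to\bigoplus_{e}\Z D^e\to\Z D^{1_\E}\to 0$ and $0\to\sD_2\to\sD\by{\deg}\Z\to 0$ to reduce everything to $|\det(\langle e,w\rangle)_{e,w}|$. The only (cosmetic) divergence is that you evaluate this determinant via the orthogonality relation $HH^{\mathrm t}=2^sI$ rather than by induction on the block decomposition $A_s=\left(\begin{smallmatrix}A_{s-1}&A_{s-1}\\A_{s-1}&-A_{s-1}\end{smallmatrix}\right)$; both give $2^{s2^{s-1}}$.
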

\begin{proof}
The first statement of (1) 
follows from the definition (cf.\ loc.\ cit.),
and the second follows from \cite[Proposition 4.2]{PW}.
To show (2), we consider a commutative diagram
with exact rows
\[
\xymatrix{
0 \ar[r] &
\sD_1 \ar[r] \ar@{^{(}-{>}}[d]_\alpha &
\displaystyle{\bigoplus_{e \in \E} \Z D^e}  \ar[r] \ar@{^{(}-{>}}[d]_\beta &
\Z D^{1_\E}  \ar[r] \ar@{^{(}-{>}}[d]_{\gamma} &
0
\\
0 \ar[r] &
\sD_2 \ar[r] &
\sD \ar[r]_-{\aug} &
\Z \ar[r]  &
0,
}
\]
where $\alpha$ and $\beta$ are the inclusions,
$\aug$ is the augmentation,
and $\gamma$ is induced by $\aug \circ \beta$.
Since $\gamma$ is injective 
and its cokernel is cyclic of order $2^s$,
we are reduced to showing $|\Coker(\beta)|=2^{2^{s-1}s}$.
Define a matrix $A_s \in M_{2^s}(\Z)$ by
$A_s:=(\langle e, w \rangle)_{e \in \E, w \in \W}$
so that we have $|\Coker(\beta)|=|\det A_s|$.
Then we have
$A_s =
\begin{pmatrix} A_{s-1} & A_{s-1} \\ A_{s-1} & -A_{s-1} \end{pmatrix}$
for any $s \geq 1$.
Now the claim follows by induction.
\end{proof}

\begin{remark}\label{rem:d123}
The composition of canonical homomorphisms
\begin{equation}\label{eq:D3}
\sD_2 \hookrightarrow \Z[w] \twoheadrightarrow 
\sD_3:=\sD/\langle \sum_{w \in \W} [w] \rangle_\Z
\end{equation}
is injective and its cokernel is of order $2^s$.
Hence we have
\[ \sD_1 \otimes \Z\left[\frac{1}{2}\right] 
= \sD_2 \otimes \Z\left[\frac{1}{2}\right]
= \sD_3 \otimes \Z\left[\frac{1}{2}\right].
\]
For a $\Z[\W]$-module $M$ and $e \in \E$,
we write 
\begin{equation}\label{eq:e-part}
M^e = \{ x \in M \mid w x = \langle e, w \rangle x
~\text{for all}~  w \in \W \}.
\end{equation}
Then for any $\Z[1/2]$-module $B$ and $i=1, 2, 3$ we have 
\[ \sD_i \otimes B \cong 
\bigoplus_{e \in \E \setminus \{ 1 \}} (\sD_i \otimes B)^e
\]
and moreover
\begin{equation}\label{eq:D123}
(\sD_1 \otimes B)^e=(\sD_2 \otimes B)^e=(\sD_3 \otimes B)^e
=D^e \otimes B.
\end{equation}
\end{remark}

\subsection{Jacobian variety}\label{sect:jac-var}
Let $J$ be the Jacobian variety of $X$.
We define $\sC$ to be the cuspidal divisor subgroup, i.e.\
\begin{equation}\label{eq:def-sC0}
\sC := \IM(\sD_2 \to J(F)) \subset J(F)
\end{equation}
where $\sD_2$ is from Lemma \ref{lem:d12}.
It is known that $\sC$ is contained in $J(F)_\Tor$
by \cite{Manin}, \cite{Gekeler}.
Since $J(F)_{\Tor}$ is finite,
we can decompose 
\begin{equation}\label{eq:def-sC}
J(F)_{\Tor} = \bigoplus_{\ell} J(F)\{\ell\},
\quad
\sC = \bigoplus_{\ell} \sC\{\ell\},
\end{equation}
where $\ell$ runs through all primes
and $\{ \ell \}$ denotes the $\ell$-primary torsion part.
If $\ell \not= 2$
we may further decompose 
(see \eqref{eq:e-part})
\[
J(F)\{\ell\} \cong \bigoplus_{e \in \E} J(F)\{\ell\}^e,
\quad
\sC\{\ell\} \cong \bigoplus_{e \in \E} \sC\{\ell\}^e.
\]

Suppose either that 
we are in the case (NF) and $(3,N)=1$,
or that we are in the case (FF).
We define $e_H \in \E$ by
\begin{align}\label{eq:def-eh}
e_H:=
\begin{cases}
\left((\frac{p_1}{3}), \dots, (\frac{p_s}{3})\right)
& \text{in (NF),~ $(3, N)=1$},
\\
\left((-1)^{\deg p_1}, \dots, (-1)^{\deg p_s}\right)
& \text{in (FF)}.
\end{cases}
\end{align}
Note that for $e \in \E$ we have
$(3, d(e))=3$ if and only if $e \not= e_H$
in the case (NF)
(see \eqref{eq:def-de)} for the definition of $d(e)$).
In the case (FF),
we have $(q+1, d(e))=q+1$ for any $e \not= e_H$,
and $(q+1, d(e_H))$ is a power of two
\cite[Remark 3.6]{PW}.
We also put
\begin{equation}
\label{eq:def-k}
k := 
\begin{cases}
12, 
\\
q^2-1, 
\end{cases}
\quad
b := 
\begin{cases}
3, & \text{ in (NF),}\\
q+1, & \text{ in (FF).}
\end{cases}
\end{equation}
In the case (FF), we write $|a|=q^{\deg a}$ for $a \in A \setminus \{ 0 \}$.

\begin{theorem}\label{thm:ohta-pw2}
${}$
\begin{enumerate}
\item (Mazur \cite[Theorem 1]{Mazur}; P\'al \cite[Theorems 1.2, 1.4]{Pal})
Suppose $s=1$. 
Then we have $\sC=J(F)_\Tor$ and it is a cyclic group of order
$(|N|-1)/(k, |N|-1)$.
\item (Ohta, \cite[Theorem (3.6.2)]{Ohta}; Papikian-Wei, \cite[Theorem 4.3]{PW})
Let $\ell$ be an odd prime.
In {\rm (NF)} and $\ell=3$, we further assume $(N, 3)=1$.
If we are in {\rm (FF)}, assume $(\ell, q(q-1))=1$.
Then, for any $e \in \E$,
we have $\sC\{\ell\}^e=J(F)\{\ell\}^e$ and it is a cyclic group 
whose order is the $\ell$-part of 
\[
\begin{cases}
1 & \text{if}~ e=1_\E,
\\
d(e)
& \text{if}~ e = e_H \not= 1_\E,
\\
d(e)/b & \text{if}~ e \not=1_\E, e_H.
\end{cases}
\]
It is generated by the class of $D^e$
unless $e=1_\E$.
\end{enumerate}
\end{theorem}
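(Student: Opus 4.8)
The plan is to handle the two parts separately. Part (1) is a direct appeal to the literature: when $s=1$ the curve $X_0(N)$ has exactly two $F$-rational cusps, whose difference generates $\sC$, so in (NF) this is Mazur's theorem \cite{Mazur} giving $\sC=J(F)_\Tor$ cyclic of order $(|N|-1)/(12,|N|-1)$, while in (FF) it is P\'al's theorem \cite{Pal} with $q^2-1$ in place of $12$. Since $k=12$ in (NF) and $k=q^2-1$ in (FF) by \eqref{eq:def-k}, I would simply quote these. Part (2), for general square-free $N$, requires reconciling the Atkin--Lehner ($\W$-) decomposition used here with the cuspidal divisor class computations of Ohta and Papikian--Wei.

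For part (2) the first step is to pass to $e$-isotypic pieces. By Remark \ref{rem:d123}, after inverting $2$ the $e$-part of $\sD_2$ is free of rank one on $D^e$ when $e\neq 1_\E$ and vanishes when $e=1_\E$ (because $D^{1_\E}$ has degree $2^s\neq 0$ and hence does not lie in $\sD_2\subset \Div^0(X)$). Consequently, for odd $\ell$ the group $\sC\{\ell\}^e$ is cyclic, generated by the class of $D^e$, and is trivial for $e=1_\E$; this already accounts for the first line of the displayed formula and for the final assertion about generators. It then remains to compute the order of the class of $D^e$ in $J(F)$ and to establish the equality $\sC\{\ell\}^e=J(F)\{\ell\}^e$.

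The order of the class of $D^e$ is the least positive integer $n$ for which $nD^e$ is the divisor of a (Drinfeld) modular unit, and this is exactly the quantity that Ohta \cite{Ohta} and Papikian--Wei \cite{PW} extract from divisors of Siegel-type units: up to $\ell$-part it equals $d(e)=\prod_i(|p_i|+e(i))$, except that for every $e\neq e_H$ a factor of $b$ (equal to $3$ in (NF) and $q+1$ in (FF)) is removed, whereas for $e=e_H$ the integer $d(e)$ is already prime to $b$ (compare the remark following \eqref{eq:def-eh}) and no factor is lost. I would verify the displayed order formula by transcribing the orders of \cite[Theorem (3.6.2)]{Ohta} and \cite[Theorem 4.3]{PW} into the character language of $\E$, tracking carefully the role of $e_H$ from \eqref{eq:def-eh}. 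I expect this matching of the two authors' conventions for the cuspidal divisors, and in particular the handling of the exceptional $b$-part, to be the main technical burden.

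Finally, the equality $\sC\{\ell\}^e=J(F)\{\ell\}^e$ is the deep input: it asserts that cuspidal classes exhaust the $\ell$-primary rational torsion in each $e$-part, once $\ell$ avoids the excluded primes (odd, coprime to $q(q-1)$ in (FF), and with $(N,3)=1$ when $\ell=3$ in (NF)). This rests on the Eisenstein-ideal method of Mazur as adapted by Ohta and by Papikian--Wei: one shows that $J(F)_\Tor$ is annihilated by the Eisenstein ideal and then identifies its Eisenstein part with $\sC$, the excluded primes being exactly those for which congruences between Eisenstein and cuspidal Hecke systems could obstruct the identification. I would invoke this step directly from the cited works. Combining the reduction to $D^e$, the order computation, and the Eisenstein equality then yields both displayed assertions.
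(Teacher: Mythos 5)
Your proposal is correct and matches the paper's treatment: the paper offers no proof of this theorem beyond the citations to Mazur, P\'al, Ohta and Papikian--Wei (plus a one-line remark that $e_H^{\pm}$ from \cite{Ohta} is unnecessary under the stated hypotheses on $\ell$ and $N$), and your outline is essentially a more detailed account of how those cited results, translated into the $\E$-character language via Remark \ref{rem:d123}, yield the statement. Your only slight imprecision is the claim that $d(e_H)$ is prime to $b$ in (FF) --- in fact $(q+1,d(e_H))$ is a power of two --- but this is harmless since only odd $\ell$ are considered.
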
 

Note that we do not need $e_H^\pm$ appearing in \cite{Ohta},
because we assume $\ell \not= 2$ and $(3, N)=1$ if $\ell=3$

\section{Generalized Jacobian}\label{sec:genjac}

\subsection{An exact sequence}
Let $\tJ$ be the generalized Jacobian variety of $X$
with modulus $C$ (see \eqref{eq:cusp1} for $C$).
Here we quickly recall some basic results
from \cite{Serre} and \cite[\S 2]{YY},
with which the readers may consult for details.
We have (almost by definition)
\begin{equation}\label{eq:def-tj}
\tJ(F) = \Div^0(X \setminus C)/\{ \div(f) \mid f \in F(X)^\times,
~ f \equiv 1 \bmod C \}.
\end{equation}
Here by $f \equiv 1 \bmod C$ we mean 
$\ord_x(f-1)>0$ for all $x \in C$.
It follows that there is an exact sequence
\begin{equation}\label{eq:delta}
0 \to \sD_3 \otimes (F^\times)_\Tor
\to \tJ(F)_\Tor \to J(F)_\Tor
\overset{\delta}{\to} \sD_3 \otimes F^\times \otimes \Q/\Z,
\end{equation}
where $\sD_3$ is from \eqref{eq:D3}.
All maps in this sequence are compatible with
the action of Atkin-Lehner involutions.
In particular, 
we can decompose the map $\delta$,
up to $2$-primary torsion,
into the direct sum of 
\begin{equation}\label{eq:delta-dec}
\delta_\ell^e :
J(F)\{\ell\}^e \to D^e \otimes F^\times \otimes \Q_\ell/\Z_\ell,
\end{equation}
where $\ell$ and $e$ ranges over all odd primes
and elements of $\E$ respectively,
see \eqref{eq:D123}.

\subsection{Connecting map}
We recall a description of the map $\delta$.
Let $\sD_2$ and $\sD_3$ 
be groups defined in Lemma \ref{lem:d12} and Remark \ref{rem:d123},
respectively.
We shall identify
$\sD_3 \otimes F^\times \otimes \Q/\Z$
with the quotient of
\[
\bigoplus_{w \in \W} \Z[w] \otimes F^\times \otimes \Q/\Z
\]
by 
$\langle \sum_{w \in \W} [w] \rangle_\Z \otimes F^\times \otimes \Q/\Z$.
\begin{lemma}[{\cite[Lemma 2.3.1]{YY}}]\label{lem:delta}
Suppose that the class $[D] \in J(F)$
of $D=\sum_{w \in \W} a_w [w] \in \sD_2$
is killed by $m \in \Z_{>0}$
so that there is an $f \in F(X)^\times$
such that $\div(f)=mD$.
Then $\delta([D])$ is given by the image
in $\sD_3 \otimes F^\times \otimes \Q/\Z$ of
\begin{equation}\label{eq:lem-delta}
\sum_{w \in \W} [w] \otimes \frac{f}{t_{[w]}^{ma_w}}([w]) 
\otimes \frac{1}{m} \quad 
\in 
\bigoplus_{w \in \W} \Z[w] \otimes F^\times \otimes \Q/\Z,
\end{equation}
where $t_{[w]}$ is a uniformizer at $[w] \in C$.
(Note that the image of 
\eqref{eq:lem-delta} 
in $\sD_3 \otimes F^\times \otimes \Q/\Z$ 
depends only on $[D] \in J(F)$
and is independent of the choices
of $m$, $f$ and $t_{[w]}$.)
\end{lemma}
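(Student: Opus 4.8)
The plan is to read off $\delta$ from the snake lemma applied to the extension defining $\tJ$. First I would recall that $\tJ$ sits in a short exact sequence of $F$-group schemes $1\to\sD_3\otimes\G_m\to\tJ\to J\to 0$; since all cusps are $F$-rational the torus $\sD_3\otimes\G_m$ is split, so Hilbert~90 yields a short exact sequence of abelian groups $0\to\sD_3\otimes F^\times\to\tJ(F)\to J(F)\to 0$. Applying the snake lemma to multiplication by $m$ and passing to the direct limit over $m$ recovers \eqref{eq:delta} and exhibits $\delta$ as the connecting map: for $[D]\in J(F)$ with $m[D]=0$ one chooses any lift $\tilde x\in\tJ(F)$, notes that $m\tilde x$ lands in $\sD_3\otimes F^\times$, and sets $\delta([D])$ to be the image of $m\tilde x$ under $c\mapsto c\otimes\tfrac1m$.

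The heart of the matter is to render both the subgroup $\sD_3\otimes F^\times$ and a convenient lift explicit through \eqref{eq:def-tj}. I would first check that the inclusion $\sD_3\otimes F^\times\hookrightarrow\tJ(F)$ sends $\sum_{w\in\W}[w]\otimes c_w$ to the class of $\div(g)$ for any $g\in F(X)^\times$ whose divisor is disjoint from $C$ and which satisfies $g([w])=c_w$ for all $w$. This is well defined: two such functions differ by one taking the value $1$ at every cusp, hence by some function $\equiv 1\bmod C$, while rescaling $g$ by a constant $c$ changes the element by $(\sum_{w\in\W}[w])\otimes c=0$ in $\sD_3\otimes F^\times$; moreover every element of $\ker(\tJ(F)\to J(F))$ arises this way, matching Serre's identification of the torus.

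Next I would lift the cuspidal class. As $D=\sum_w a_w[w]$ is supported on $C$, the moving lemma furnishes $h\in F(X)^\times$ with $D':=D+\div(h)$ disjoint from $C$, and then $\ord_{[w]}(h)=-a_w$ for all $w$; the class $\tilde x:=[D']\in\tJ(F)$ lifts $[D]$. Since $mD=\div(f)$ we get $mD'=\div(fh^m)$, so $m\tilde x$ lies in $\sD_3\otimes F^\times$ and, by the previous paragraph, equals $\sum_{w\in\W}[w]\otimes (fh^m)([w])$. Writing $f=t_{[w]}^{ma_w}\tilde f$ and $h=t_{[w]}^{-a_w}\tilde h$ with $\tilde f,\tilde h$ units at $[w]$ gives $(fh^m)([w])=\frac{f}{t_{[w]}^{ma_w}}([w])\cdot\big((t_{[w]}^{a_w}h)([w])\big)^{m}$.

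Applying $c\mapsto c\otimes\tfrac1m$ and using the identity $x^{m}\otimes\tfrac1m=x\otimes\tfrac{m}{m}=0$ in $F^\times\otimes\Q/\Z$, the correction term contributed by $h$ vanishes, leaving precisely \eqref{eq:lem-delta}. The stated independence of $m$, $f$ and $t_{[w]}$ is then forced by the canonicity of $\delta$, and can also be verified directly: a change of uniformizer $t_{[w]}\mapsto v\,t_{[w]}$ alters the $w$-th factor by $v([w])^{-ma_w}\otimes\tfrac1m=0$, a change $f\mapsto cf$ alters the total by $(\sum_{w\in\W}[w])\otimes c\otimes\tfrac1m=0$, and replacing $(m,f)$ by $(km,f^k)$ is absorbed by $x^{k}\otimes\tfrac1{km}=x\otimes\tfrac1m$. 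The main obstacle is not the final cancellation but the bookkeeping in the middle two paragraphs: pinning down the torus embedding by values at the cusps and correctly tracking how moving $D$ off $C$ introduces the unit $h$, whose entire contribution is an $m$-th power and therefore dies in $\Q/\Z$.
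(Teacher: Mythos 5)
Your argument is correct; the paper itself offers no proof of this lemma, simply citing \cite[Lemma 2.3.1]{YY}, and your derivation --- identifying $\delta$ as the snake-lemma connecting map for $0\to\sD_3\otimes F^\times\to\tJ(F)\to J(F)\to 0$, pinning down the torus embedding by values at the cusps, lifting $[D]$ off $C$ via an auxiliary $h$ with $\ord_{[w]}(h)=-a_w$, and observing that the resulting correction is an $m$-th power and hence dies after tensoring with $\tfrac1m$ in $F^\times\otimes\Q/\Z$ --- is precisely the standard argument that the cited reference carries out. The independence checks at the end are also complete, so there is no gap.
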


\subsection{Main result}
For $i = 1,...,s$, 
define $e^{(i)}=(e^{(i)}_j)_j \in \E$ by 
\begin{equation}\label{eq:def-ei}
e^{(i)}_j = 
\begin{cases}
-1 & \text{ if } i=j,\\
1 & \text{ otherwise.}
\end{cases}
\end{equation}
We now arrive at our main result
(see \eqref{eq:def-k} and \eqref{eq:def-de)}
for the definitions of $k$ and $d(e)$).

\begin{theorem}\label{thm:main3}
Let $e \in \E \setminus \{ 1_\E \}$.
The order of $\delta([D^e])$ is given by
\[
\begin{cases}
d(e^{(i)})/(d(e^{(i)}),~ 2^{s-1}k)
& \text{ if $e=e^{(i)}$ for some $i$},
\\
1 
& \text{ otherwise.}
\end{cases}
\]
\end{theorem}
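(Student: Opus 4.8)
The plan is to compute $\delta([D^e])$ directly from the explicit formula of Lemma \ref{lem:delta}. First I would record, using Theorem \ref{thm:ohta-pw2}, the order $m$ of the class $[D^e]\in J(F)$, and fix a function $f\in F(X)^\times$ with $\div(f)=mD^e$. In the case (NF) such an $f$ can be taken to be an eta-quotient $\prod_{d\mid N}\eta(d\tau)^{r_d}$, and in the case (FF) the analogous quotient of Gekeler's Drinfeld discriminant; the exponents $r_d$ are pinned down by inverting the order-at-cusps matrix, which for square free $N$ has the recursive tensor shape already met in the proof of Lemma \ref{lem:d12}(2) and is therefore invertible after inverting the relevant primes. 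By \eqref{eq:delta-dec} and Remark \ref{rem:d123} it is enough to work in a single odd $e$-isotypic component, so the $2$-primary subtleties (and the factor $2^{s-1}$) can be peeled off at the end.

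The core of the computation is the normalized leading coefficient $\lambda_w:=(f/t_{[w]}^{m\langle e,w\rangle})([w])\in F^\times$ at each cusp $[w]$, since Lemma \ref{lem:delta} identifies $\delta([D^e])$ with the image of $\sum_{w\in\W}[w]\otimes\lambda_w\otimes\frac1m$ in $\sD_3\otimes F^\times\otimes\Q/\Z$. Rather than evaluating all $2^s$ coefficients by hand, I would exploit the Atkin-Lehner symmetry of Lemma \ref{lem:d12}(1): since $W_{w'}(D^e)=\langle e,w'\rangle D^e$, the divisor of $W_{w'}^*f$ equals that of $f^{\langle e,w'\rangle}$, whence $W_{w'}^*f=c(w')\,f^{\langle e,w'\rangle}$ for a pseudo-eigenvalue $c(w')\in F^\times$. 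Comparing leading coefficients along $W_{w'}([w])=[w+w']$ then yields a transformation rule $\lambda_{w+w'}=c(w')\,\lambda_w^{\langle e,w'\rangle}\,\rho(w,w')$, where $\rho$ records the effect of $W_{w'}$ on the local uniformizers, i.e. the cusp widths. For square free level these width factors are explicit powers of the $|p_i|$, and together with the $c(w')$ they carry all the arithmetic: it is precisely here that the quantities $|p_i|+e_i$, hence $d(e)$, and the denominator $k$ (coming from the $24$-th, resp.\ $(q^2-1)$-st, roots in the transformation law of $\eta$, resp.\ of the Drinfeld discriminant) enter.

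With the transformation rule in hand the order of $\delta([D^e])$ becomes a question inside $F^\times\otimes\Q/\Z$. Because $\sD_3=\sD/\langle\sum_w[w]\rangle_\Z$, the element $\sum_w[w]\otimes\lambda_w\otimes\frac1m$ vanishes exactly when all the classes $\lambda_w\otimes\frac1m$ coincide, and in general its order is governed by the differences $\lambda_w\lambda_{w'}^{-1}\otimes\frac1m$, which the rule expresses through the classes of the $c(w')$. When $e=e^{(i)}$ the character $\langle e,-\rangle$ depends only on the $i$-th coordinate, the single nontrivial constant has class of order dividing $d(e^{(i)})=(|p_i|-1)\prod_{j\ne i}(|p_j|+1)$, and after reinstating the $2$-primary factor one reads off $d(e^{(i)})/(d(e^{(i)}),\,2^{s-1}k)$.

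The hard part will be the case $|e^{-1}(-1)|\ge 2$, where the claim is $\delta([D^e])=0$. Here I expect $c(w')$ to factor as a product over the (at least two) coordinates in $e^{-1}(-1)$, and the issue is to show this product is an $m$-th power in $F^\times$ up to roots of unity, so that all $\lambda_w\otimes\frac1m$ become equal and the sum collapses in $\sD_3\otimes F^\times\otimes\Q/\Z$. Establishing this multiplicative collapse---that two or more flipped primes force the $e$-component of $\delta$ to die, in accordance with Remark \ref{rem 1.2.3}---is the crux, and is exactly where the width factors and the $\eta$- (resp.\ discriminant-) transformation constants must be controlled precisely rather than merely bounded.
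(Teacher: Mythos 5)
Your overall strategy is the same as the paper's: the paper takes $f=\Delta^e:=\prod_{w\in\W}\Delta(m(w)z)^{\langle e,w\rangle}$ (an $\eta$-quotient, resp.\ Drinfeld-discriminant quotient, with $\div(\Delta^e)=d(e)D^e$, so one may use $m=d(e)$ rather than the exact order of $[D^e]$), normalizes the uniformizers by $t_{[w]}:=t_{[w_\infty]}\circ W_{\bar w}$ so that your factor $\rho$ disappears, and reads off the leading coefficients $\lambda_w$ as the Atkin--Lehner multipliers $c(e,\bar w)$ defined by $\Delta^e(W_wz)=c(e,w)\,\Delta^e(z)^{\langle e,w\rangle}$. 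The problem is that the decisive step of your argument is exactly the one you leave as an expectation. The entire theorem --- both the vanishing for $|e^{-1}(-1)|\ge 2$ and the exponent $2^{s-1}k$ for $e=e^{(i)}$ --- comes from the explicit evaluation of these multipliers, which reduces to the elementary character sum $\ord_{p_i}\bigl(\prod_{w'\in\W}(m(w'),m(w))^{\langle e,w'\rangle}\bigr)=\tilde w_i\sum_{w',\,w'_i=1}\langle e,w'\rangle$; by orthogonality this sum is $0$ unless $e=e^{(i)}$, and equals $2^{s-1}$ (the number of such $w'$) in that case. You flag precisely this computation as ``the crux'' and do not carry it out, so the proof is incomplete at its essential point --- although, once the specific function $\Delta^e$ is used, the ``multiplicative collapse'' you worry about is not a delicate statement about $m$-th powers up to roots of unity: the constants $c(e,w)$ are literally equal to $1$ when $e\ne e^{(i)}$, so all $\lambda_w=1$ and the sum dies in $\sD_3$ for the trivial reason you correctly identify.

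Two smaller inaccuracies. First, the factor $2^{s-1}$ in the answer has nothing to do with ``peeling off $2$-primary subtleties'': it is the value of the character sum above, i.e.\ it counts the divisors $m(w')$ of $N$ divisible by $p_i$, and it survives into the odd part of the order (e.g.\ if $p_i\equiv 1\bmod 3^r$ it changes the $3$-part of nothing, but for odd primes dividing $d(e^{(i)})$ it is invisible --- the point is that it enters the gcd $(d(e^{(i)}),2^{s-1}k)$ as an honest integer, not as a $2$-adic correction). Second, determining the exponents $r_d$ of your $\eta$-quotient by inverting the order-at-cusps matrix is avoidable and somewhat dangerous (it introduces denominators); the paper sidesteps it by computing the order of $\Delta^e$ at the single cusp $[w_\infty]$ from the $q$-expansion and then transporting it to all other cusps by the same Atkin--Lehner identity, which is why that identity must be proved first in any case.
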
 

The proof of this theorem is given in the next section.
Combined with Theorem \ref{thm:ohta-pw2},
it implies Theorems \ref{thm:main1} and \ref{thm:main2}.
For the latter, we can also deduce 
the following supplementary results.
We define
\[
\sM_2':= \bigoplus_e
\left( \Z/(d(e)/b) \Z \right).
\]
where $e$ ranges over 
$\E \setminus \{ 1_\E, e_H, e^{(1)}, \dots, e^{(s)} \}$
(see \eqref{eq:def-k} and \eqref{eq:def-eh} for $b$ and $e_H$).

\begin{corollary}\label{cor:main3}
If we are in the case {\rm (NF)} 
we assume $(3, N)=1$ and put $\ell=3$.
If we are in the case {\rm (FF)} 
we assume  $\ell$ is an odd prime divisor of $q+1$.
Then there is an isomorphism
\[ \tJ(F)\{\ell\}
\cong \sM_2' \otimes \Z_{(\ell)}.
\]
\end{corollary}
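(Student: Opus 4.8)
The plan is to read the $\ell$-primary part of the exact sequence \eqref{eq:delta} one Atkin--Lehner eigenspace at a time, feeding in Theorems \ref{thm:ohta-pw2} and \ref{thm:main3}. First I would dispose of the leftmost term. Its torsion factor is $(F^\times)_\Tor = \mu_F$, and for our odd $\ell$ this has trivial $\ell$-primary part: in (NF) we have $\mu_\Q = \{ \pm 1 \}$, while in (FF) an odd prime $\ell \mid q+1$ cannot divide $q-1 = |\mu_F|$, since $\ell \mid (q+1)-(q-1)=2$ would force $\ell = 2$. Taking $\ell$-primary parts in \eqref{eq:delta} therefore kills $\sD_3 \otimes (F^\times)_\Tor$ and yields a canonical isomorphism $\tJ(F)\{\ell\} \cong \Ker\big(\delta\colon J(F)\{\ell\} \to \sD_3 \otimes F^\times \otimes \Q/\Z\big)$.

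Next, since $\ell$ is odd, the decomposition \eqref{eq:delta-dec} writes $\delta$ on $J(F)\{\ell\}$ as the direct sum of the maps $\delta_\ell^e\colon J(F)\{\ell\}^e \to D^e \otimes F^\times \otimes \Q_\ell/\Z_\ell$ over $e \in \E$, so that $\tJ(F)\{\ell\} \cong \bigoplus_{e \in \E} \Ker(\delta_\ell^e)$. Our hypotheses on $\ell$ are exactly those under which Theorem \ref{thm:ohta-pw2}(2) applies (note $(\ell, q(q-1)) = 1$ in (FF) because $\ell$ divides neither $q$ nor $q-1$), so each $J(F)\{\ell\}^e$ is cyclic, generated for $e \neq 1_\E$ by the $\ell$-part of the class $[D^e]$, of order the $\ell$-part of $1$, $d(e)$, or $d(e)/b$ according as $e = 1_\E$, $e = e_H \neq 1_\E$, or $e \notin \{ 1_\E, e_H \}$. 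Because $\delta$ respects the primary decomposition, $\delta_\ell^e$ sends this generator to the $\ell$-part of $\delta([D^e])$, whose order is computed in Theorem \ref{thm:main3}; hence each $\Ker(\delta_\ell^e)$ is pinned down by comparing two orders.

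I would then run through the characters $e$ for which the kernel should vanish. For $e = 1_\E$ the source is trivial. For $e = e_H$ the source is trivial as well on $\ell$-parts: in (NF) with $\ell = 3$ the equivalence $(3, d(e)) = 3 \Leftrightarrow e \neq e_H$ gives $3 \nmid d(e_H)$, and in (FF) the gcd $(q+1, d(e_H))$ is a power of two by \cite[Remark 3.6]{PW}, so the odd prime $\ell \mid q+1$ does not divide $d(e_H)$. For $e = e^{(i)}$, if $e^{(i)} = e_H$ we are already done; otherwise $e^{(i)} \neq 1_\E, e_H$, and Theorem \ref{thm:main3} gives $\delta([D^{e^{(i)}}])$ of order $d(e^{(i)})/(d(e^{(i)}),\, 2^{s-1}k)$. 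Here the key numerical point is that, $\ell$ being odd, the $\ell$-part of $2^{s-1}k$ equals the $\ell$-part of $b$ --- using $\ord_3(12) = 1 = \ord_3(3)$ in (NF), and $\ell \nmid q-1$, whence $\ord_\ell(q^2-1) = \ord_\ell(q+1)$, in (FF) --- while $b \mid d(e^{(i)})$ since $e^{(i)} \neq e_H$. Thus the $\ell$-part of the above order equals the $\ell$-part of $d(e^{(i)})/b$, which is exactly the order of the cyclic group $J(F)\{\ell\}^{e^{(i)}}$; a homomorphism sending a generator to an element of equal order is injective, so $\Ker(\delta_\ell^{e^{(i)}}) = 0$.

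Finally, for every remaining character, i.e.\ $e \notin \{ 1_\E, e_H, e^{(1)}, \dots, e^{(s)} \}$, Theorem \ref{thm:main3} gives $\delta([D^e]) = 0$, so $\delta_\ell^e = 0$ and $\Ker(\delta_\ell^e) = J(F)\{\ell\}^e \cong (\Z/(d(e)/b)\Z) \otimes \Z_{(\ell)}$. Summing over these $e$ then gives $\tJ(F)\{\ell\} \cong \sM_2' \otimes \Z_{(\ell)}$, as claimed. I expect the only delicate step to be the valuation bookkeeping in the case $e = e^{(i)}$: one must check precisely that localization at the odd prime $\ell$ collapses the denominator $(d(e^{(i)}),\, 2^{s-1}k)$ of Theorem \ref{thm:main3} down to $b$, which is where the interplay among $b$, $k$, and the inequality $e_H \neq e^{(i)}$ really matters. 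Once this identity is secured, the eigenspace-by-eigenspace assembly is formal.
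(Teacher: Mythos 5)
Your proof is correct and takes essentially the same route as the paper: the paper's one-sentence proof is exactly this comparison of Theorem \ref{thm:ohta-pw2} with Theorem \ref{thm:main3}, using the remarks after \eqref{eq:def-eh} (which give $b \mid d(e)$ for $e \neq e_H$ and $\ell \nmid d(e_H)$) together with the identity $\ord_\ell(k)=\ord_\ell(b)$. Your eigenspace-by-eigenspace bookkeeping, including the collapse of the denominator $(d(e^{(i)}),\, 2^{s-1}k)$ to the $\ell$-part of $b$, is a faithful elaboration of precisely those ingredients.
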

\begin{proof}
Using remarks after \eqref{eq:def-eh} and
$\ord_\ell(k)=\ord_\ell(b)$,
this follows by comparing 
Theorem \ref{thm:ohta-pw2} with Theorem \ref{thm:main3}.
\end{proof}

\section{Proof of Theorem \ref{thm:main3}}\label{sec:pf}

\subsection{Discriminant functions}\label{sec:disc}
Let $F_\infty$ be the completion of $F$ with respect to the absolute value $|\cdot|$ on $F$
(which means $\Q_\infty = \R$ and $\F_q(t)_\infty = \F_q(\!(t^{-1})\!)$).
Let $\C_\infty$ be the completion of a chosen algebraic closure of $F_\infty$.
Let 
$$\mathfrak{H} := \begin{cases} \{z \in \C_\infty \mid \text{Im}(z)>0\}, & \text{ in the case (NF),} \\
\C_\infty - F_\infty, & \text{ in the case (FF).}
\end{cases}$$
Let $\Delta(z)$ be the 
\textit{modular discriminant function} \cite[Example 3.4.3]{Silverman}
(resp.
\textit{Drinfeld discriminant function} \cite[(1.2)]{Gekeler})  on $\mathfrak{H}$,
which satisfies
$$\Delta\left(\frac{az+b}{cz+d}\right) = (cz+d)^k \cdot \Delta(z)
\quad \text{ for all } 
\begin{pmatrix}a&b\\ c&d\end{pmatrix} \in G,
$$
where $k$ is from \eqref{eq:def-k} and
$G=\SL_2(A)$ (resp.\ $G=\GL_2(A)$) 
in the case (NF) (resp.\ (FF)).
%
Given $e=(e_i)_{i=1}^s \in \E$, we define
\begin{align}
\label{eq:def-delta-e}
\Delta^e(z) := \prod_{w \in \W} \Delta(m(w)z)^{\langle e, w \rangle}.
\end{align}

For $w \in \W$ and $e \in \E \ \backslash \{1_\E\}$, 
we define 
\begin{equation}\label{eq:def-c}
c(e, w)=
\begin{cases} p_i^{-2^{s-1}  k} & \text{ if $e = e^{(i)}$ and $w_i =1$,} \\
1 & \text{ otherwise},
\end{cases}
\end{equation}
where $k$ is from \eqref{eq:def-k}
and $e^{(i)}$ is from \eqref{eq:def-ei}.

\begin{lemma}\label{lem 4.1.1}
Given $w \in \W$ and $e \in \E \ \backslash \{1_\E\}$, 
we have
\begin{eqnarray}\label{eqn 2.2}
\Delta^e( W_w z) &=&c(e, w) \Delta^e(z)^{\langle e,w\rangle}.
\end{eqnarray}
\end{lemma}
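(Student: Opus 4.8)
The plan is to first pin down the \emph{shape} of the identity by a divisor argument, and only then to compute the constant by hand. For $e \neq 1_\E$ the character $\langle e, -\rangle$ of $\W$ is nontrivial, so $\sum_{w \in \W}\langle e, w\rangle = 0$ and the total weight $k\sum_{w}\langle e,w\rangle$ of the product \eqref{eq:def-delta-e} vanishes; thus $\Delta^e$ is a weight-zero modular function, in fact a modular unit whose divisor on $X$ is a multiple of $D^e$ (by the standard formula for the order of $\Delta(mz)$ at the cusps). Since $W_w$ is an involution permuting the cusps by $[w'] \mapsto [w+w']$ and $W_w(D^e) = \langle e, w\rangle D^e$ by Lemma \ref{lem:d12}(1), the divisor of $\Delta^e \circ W_w$ equals $\langle e, w\rangle$ times that of $\Delta^e$, i.e.\ the divisor of $\Delta^e(z)^{\langle e,w\rangle}$. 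Hence $\Delta^e(W_w z) = C \cdot \Delta^e(z)^{\langle e, w\rangle}$ for some nonzero constant $C = C(e,w) \in \C_\infty^\times$. This already produces the exponent $\langle e, w\rangle$ predicted by \eqref{eqn 2.2}; the whole content of the lemma is the evaluation $C = c(e,w)$.

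To compute $C$, I would use an explicit representative matrix for $W_w$ together with the automorphy law $\Delta(\gamma z) = (cz+d)^k \Delta(z)$, applied termwise in \eqref{eq:def-delta-e}. For each $w' \in \W$ I factor
\[
\begin{pmatrix} m(w') & 0 \\ 0 & 1 \end{pmatrix} W_w
= g\,\gamma_{w'} \begin{pmatrix} m(w+w') & 0 \\ 0 & 1 \end{pmatrix},
\]
where $g = (m(w), m(w')) \in A_+$ is a scalar, $m(w+w') = m(w)m(w')/g^2$ by the square free group law, and $\gamma_{w'} \in \SL_2(A)$ (resp.\ $\GL_2(A)$ in (FF)) then necessarily has the correct determinant. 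As the scalar $g$ acts trivially as a fractional linear transformation, the automorphy law gives $\Delta(m(w')\,W_w z) = j_{w'}(z)^k\,\Delta(m(w+w')z)$ with $j_{w'}$ affine in $z$. Substituting this into \eqref{eq:def-delta-e}, reindexing by $w'' = w+w'$, and using the character identity $\langle e, w'\rangle = \langle e, w\rangle\langle e, w''\rangle$ reassembles the discriminant factors into $\Delta^e(z)^{\langle e,w\rangle}$ and exhibits $C$ as the product of the constant parts of the $j_{w'}^{k\langle e,w'\rangle}$; the $z$-dependence cancels exactly because the identity of the previous paragraph already forces $C$ to be constant. Equivalently, one may read $C$ off as the ratio of the leading coefficients of $\Delta^e$ at the cusps $W_w(\infty)$ and $\infty$.

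Either route localizes the computation at the primes: the exponent of $p_i$ in $C$ is governed by the character sum
\[
\sum_{w'' \in \W} \tilde{w}''_i \, \langle e, w''\rangle
= e_i \prod_{j \neq i} (1 + e_j),
\]
which vanishes as soon as some $e_j = -1$ with $j \neq i$, and equals $\pm 2^{s-1}$ precisely when $e_j = +1$ for all $j \neq i$, that is, precisely when $e = e^{(i)}$. This is the structural reason that $C = 1$ unless $e = e^{(i)}$ for some $i$ with $w_i = 1$, in which case only $p_i$ contributes and $C = p_i^{-2^{s-1}k} = c(e,w)$, matching \eqref{eq:def-c}. Since $N$ is square free one has $W_w = \prod_{i : w_i = 1} W_{p_i}$ with commuting factors, so the general case can alternatively be assembled from the single-prime involutions $W_{p_i}$ by iterating the identity, which keeps the constant bookkeeping transparent and explains the role of the condition $w_i = 1$.

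The step I expect to be the main obstacle is the exact evaluation of $C$. The character sum above transparently explains \emph{which} $e$ contribute, but pinning down the precise power $2^{s-1}k$, and its sign, is delicate: the sign is sensitive to the chosen normalization of $W_w$ in \cite{Ohta,PW}, and one must track the scalars $g = (m(w),m(w'))$ and the affine factors $j_{w'}$ carefully through the reindexing. A subsidiary technical point is checking that the factorization above is genuinely integral, i.e.\ that $\gamma_{w'} \in \SL_2(A)$ (resp.\ $\GL_2(A)$), which again uses that $N$ is square free; in the case (FF) one must also verify that the $\F_q^\times$-valued determinant factors arising from $\GL_2(A)$ cancel in the weighted product.
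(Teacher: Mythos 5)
Your proposal is correct and, at its core, follows the same route as the paper: the paper's proof simply applies the transformation law of $\Delta$ termwise to \eqref{eq:def-delta-e}, obtains the constant as $\prod_{w' \in \W}\big(m(w'),m(w)\big)^{-\langle e,w'\rangle k}$, and evaluates its $p_i$-order by exactly the character sum $\tilde{w}_i\sum_{w'_i=1}\langle e,w'\rangle$ that you identify (nonzero only for $e=e^{(i)}$). Your preliminary divisor argument pinning down the exponent $\langle e,w\rangle$ and the constancy of $C$ is a harmless extra scaffold the paper does not need, and you correctly flag the only delicate point, namely the sign and normalization bookkeeping in the gcd factors.
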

\begin{proof}
From the transformatin law of $\Delta$, it is straightforward that
$$
\Delta^e (W_w z) = \Delta^e(z)^{\langle e, w\rangle} \cdot \prod_{w' \in \W} \big(m(w'),m(w)\big)^{-\langle e, w' \rangle \cdot k}.$$
For $i = 1,...,s$, one has
(see a sentence after \eqref{eq:w-a-c} for the notation $\tilde{} ~$)
\begin{align*}
\ord_{p_i}&\left( \prod_{w' \in \W} \big(m(w'),m(w)\big)^{\langle e, w' \rangle}\right)
= \tilde{w}_i \cdot \sum_{w' \in \W \atop w'_i = 1} \langle e, w'\rangle \nonumber 
\\
&=
\begin{cases} 2^{s-1}, & \text{ if $e = e^{(i)}$ and $w_i = 1$,} \\
0, & \text{ otherwise},
\end{cases}
\end{align*}
%
%
proving the lemma.
\end{proof}

\begin{lemma}
Let $e \in \E \setminus \{ 1_\E \}$.
\begin{enumerate}
\item 
The function $\Delta^e(z)$ from \eqref{eq:def-delta-e}
is a rational function on $X$ defined over $F$.
\item 
We have
$\div(\Delta^e)=d(e) D^e$
(see \eqref{eq:def-de)} and \eqref{eq:def-De}).
\end{enumerate}
\end{lemma}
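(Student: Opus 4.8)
The plan is to prove both statements by combining the transformation law from Lemma \ref{lem 4.1.1} with a divisor-degree count, using the known degrees of $D^e$ recorded after \eqref{eq:def-De}. First I would address part (1). The function $\Delta^e(z)$ is built from $\Delta(m(w)z)$, which are modular (resp.\ Drinfeld modular) forms of weight $k$, so each individual factor is not $\Gamma_0(N)$-invariant; only the weight-zero combination can descend to a rational function on $X$. The key observation is that Lemma \ref{lem 4.1.1} shows $\Delta^e$ transforms under each Atkin-Lehner involution $W_w$ by the scalar $c(e,w)\cdot(\,\cdot\,)^{\langle e,w\rangle}$; in particular, taking $w$ to range over the elements realizing $\Gamma_0(N)$-translations (i.e.\ $w=0$, so $W_0$ is the identity, together with the cusp-permutation structure), the weight-$k$ factors $(cz+d)^k$ cancel in the product because $\sum_{w\in\W}\langle e,w\rangle$ controls the total weight. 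More precisely, I would verify that for $\gamma\in\Gamma_0(N)$ the automorphy factors coming from the transformation of each $\Delta(m(w)z)$ cancel since the sum of exponents $\langle e,w\rangle$ over the orbit is balanced; hence $\Delta^e(\gamma z)=\Delta^e(z)$, so $\Delta^e$ defines a rational function on $X$. That it is defined over $F$ follows from the rationality of the $q$-expansion (resp.\ $t$-expansion) coefficients of $\Delta$, which lie in $A$.

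For part (2), the strategy is to compute the divisor of $\Delta^e$ directly from the known order of vanishing of $\Delta$ at the cusps. Since $\Delta$ has no zeros or poles on $\mathfrak{H}$ (it is nonvanishing on the upper half-plane, resp.\ Drinfeld upper half-space) and vanishes only at the cusps, $\div(\Delta^e)$ is supported on $C$. I would then compute $\ord_{[w']}(\Delta^e)$ for each cusp $[w']$. The order of vanishing of $\Delta(m(w)z)$ at the cusp $[w']=[1/m(w')]$ is governed by the width of the cusp and the standard formula for the vanishing order of $\Delta$ (the leading $q$-power, resp.\ $t$-power, of the discriminant). Summing these with the signs $\langle e,w\rangle$ and using the multiplicativity structure from \eqref{eq:w-a-c}, I expect the coefficient at $[w']$ to come out to $d(e)\langle e,w'\rangle$, which is exactly the coefficient of $[w']$ in $d(e)D^e$ by \eqref{eq:def-De}. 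This would give $\div(\Delta^e)=d(e)D^e$ as claimed.

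The main obstacle will be the explicit computation of the vanishing orders at each cusp and confirming that the signed sum collapses to the clean formula $d(e)D^e$. This requires careful bookkeeping of the cusp widths and the interaction between the indexing bijections $\W\to\{m\mid N\}\to C$ and the scaling $z\mapsto m(w)z$; the combinatorics resemble those already carried out in the $\ord_{p_i}$ computation inside the proof of Lemma \ref{lem 4.1.1}, so I would try to leverage that same pairing identity. A cleaner alternative, which I would prefer if available, is to avoid computing all cusp orders from scratch: since we already know from the lemma above that $D^e$ has degree zero, and $\Delta^e$ is a genuine rational function, $\div(\Delta^e)$ automatically has degree zero and is supported on $C$; then it suffices to check that $\div(\Delta^e)$ is $W_w$-equivariant with the same eigencharacter $\langle e,w\rangle$ as $D^e$ (which follows immediately from Lemma \ref{lem 4.1.1} together with Lemma \ref{lem:d12}(1)), pinning $\div(\Delta^e)$ to a scalar multiple of $D^e$, and finally determining that scalar by reading off a single leading coefficient. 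Verifying the normalizing constant is $d(e)$ would then be the one place where the explicit $q$-expansion (resp.\ $t$-expansion) of $\Delta$ genuinely enters.
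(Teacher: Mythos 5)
Your proposal is correct, and the ``cleaner alternative'' you prefer for part (2) is exactly the paper's own proof: one computes $\ord_{[w_\infty]}(\Delta^e)=d(e)\langle e,w_\infty\rangle$ from the $q$-expansion at the single cusp at infinity and then propagates to all other cusps via Lemma \ref{lem 4.1.1} together with Lemma \ref{lem:d12}(1), since the $W_w$ act transitively on $C$. Part (1) also matches the paper (total weight vanishes because $\sum_{w}\langle e,w\rangle=0$ for $e\neq 1_\E$, and $F$-rationality comes from the $F$-rational $q$-expansion coefficients via $F(X)=\C_\infty(X)\cap F(\!(q)\!)$), though note the descent to $X$ is a matter of $\Gamma_0(N)$-invariance and weight cancellation, not of the Atkin--Lehner involutions, which do not lie in $\Gamma_0(N)$.
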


\begin{proof}
(1) The transformation law of $\Delta$ implies that $\Delta^e$ lies in $\C_\infty(X)$.
Note that the \lq\lq $q$-expansion of $\Delta$ at $\infty$\rq\rq\ has $F$-rational coefficients (cf.\ \cite{Gekeler2} in the case (FF)). Regarding $\C_\infty(X)$ as a subfield of $\C_\infty(\!(q)\!)$, the result holds from the fact that
$F(X) = \C_\infty(X) \cap F(\!(q)\!)$.\\
(2) Let $w_\infty:= (-1,...,-1) \in \W$. Then $[w_\infty] = [1/N] \in C$, corresponding to the \lq\lq cusp at $\infty$\rq\rq. Considering the $q$-expansion of $\Delta$, it is observed that
$$\ord_{[w_\infty]}(\Delta^e) = d(e) \cdot \langle e, w_\infty\rangle  = d(e) \cdot \ord_{[w_\infty]} D^e.$$
Thus the result follows from Lemma~\ref{lem:d12} (1) and \ref{lem 4.1.1}.
\end{proof}


\subsection{Evaluation of the connecting map}
Theorem \ref{thm:main3} immediately follows from the following result:

%
%
\begin{proposition}\label{prop:order}
Let $e \in \E \setminus \{ 1_\E \}$.
If $e=e^{(i)}$ for some $i=1, \dots, s$
(see \eqref{eq:def-ei}), then
we have
\begin{eqnarray}
\delta(D^{e^{(i)}})&=&
\sum_{w \in \W \atop w_i = 0} [w] \otimes p_i \otimes \frac{-2^{s-1} k}{d(e^{(i)})} \nonumber \\
&=& D^{e^{(i)}} \otimes p_i \otimes
\frac{-2^{s-2} k}{d(e^{(i)})} \quad \text{ in }  \sD_3 \otimes F^\times \otimes \Q/\Z. \nonumber
\end{eqnarray}
Otherwise, we have $\delta(D^e)=0$.
\end{proposition}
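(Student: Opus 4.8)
The plan is to evaluate the connecting map $\delta$ on the cuspidal class $[D^e]$ by applying the explicit formula of Lemma~\ref{lem:delta}, using the discriminant function $\Delta^e$ as the explicit rational function witnessing that $[D^e]$ is torsion. Since the preceding lemma gives $\div(\Delta^e) = d(e) D^e$, I would take $m = d(e)$, $f = \Delta^e$, and $a_w = \langle e, w \rangle$ in Lemma~\ref{lem:delta}. The formula then produces
\[
\delta([D^e]) = \sum_{w \in \W} [w] \otimes \frac{\Delta^e}{t_{[w]}^{d(e)\langle e,w\rangle}}([w]) \otimes \frac{1}{d(e)}
\quad \text{in } \sD_3 \otimes F^\times \otimes \Q/\Z,
\]
so the whole computation reduces to understanding the leading coefficient of $\Delta^e$ at each cusp $[w]$, i.e.\ the value $\left(\Delta^e/t_{[w]}^{d(e)\langle e,w\rangle}\right)([w]) \in F^\times$.

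First I would reduce the problem at an arbitrary cusp $[w]$ to the cusp at infinity by means of the Atkin-Lehner involution $W_w$, since $W_w([w_0]) = [w+w_0]$ moves $[w]$ to the distinguished cusp $[1_\W] = [\infty]$ (taking $w_0 = 1_\W$). The transformation law from Lemma~\ref{lem 4.1.1}, namely $\Delta^e(W_w z) = c(e,w)\,\Delta^e(z)^{\langle e,w\rangle}$, is exactly the tool that lets me transport the leading-coefficient computation at $[w]$ to one at $[\infty]$, and it is where the explicit constant $c(e,w)$ from \eqref{eq:def-c} enters. The key arithmetic input is that $c(e,w)$ is trivial unless $e = e^{(i)}$ and $w_i = 1$, in which case it equals $p_i^{-2^{s-1}k}$; this is precisely the source of the dichotomy in the statement. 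I expect that after normalizing the uniformizers $t_{[w]}$ compatibly (which is legitimate since, by the parenthetical remark in Lemma~\ref{lem:delta}, the image in $\sD_3 \otimes F^\times \otimes \Q/\Z$ is independent of these choices), the leading coefficient of $\Delta^e$ at $[w]$ is $c(e,w)$ up to a contribution that either vanishes in $F^\times \otimes \Q/\Z$ or is killed by the quotient to $\sD_3$.

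For the case $e = e^{(i)}$, collecting the contributions gives a sum over $w$ with $w_i = 1$ of terms $[w] \otimes p_i^{-2^{s-1}k} \otimes \frac{1}{d(e^{(i)})}$. I would then rewrite this using the relation in $\sD_3$, where $\sum_{w \in \W}[w] = 0$, to re-express the sum over $\{w_i = 1\}$ as a sum over $\{w_i = 0\}$ up to sign; this is what converts the first displayed line of the Proposition into the second, and explains the passage from $\sum_{w_i=0}[w]$ to the cleaner expression $D^{e^{(i)}} \otimes p_i \otimes \frac{-2^{s-2}k}{d(e^{(i)})}$ via $D^{e^{(i)}} = \sum_w \langle e^{(i)}, w\rangle [w]$. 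For $e \neq e^{(i)}$ for all $i$, the constant $c(e,w)$ is identically $1$, so the leading coefficients contribute nothing in $F^\times \otimes \Q/\Z$ and $\delta([D^e]) = 0$.

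The main obstacle I anticipate is the careful bookkeeping of the leading coefficients of $\Delta^e$ at the non-infinite cusps, i.e.\ making precise the claim that the value $\left(\Delta^e/t_{[w]}^{d(e)\langle e,w\rangle}\right)([w])$ is governed, modulo contributions that die in the relevant tensor quotient, exactly by the constant $c(e,w)$. This requires pinning down how the change of variables $z \mapsto W_w z$ interacts with the chosen uniformizers and the $q$-expansion normalization, and verifying that the ambiguities (roots of unity, the relation defining $\sD_3$, and the independence of choices guaranteed by Lemma~\ref{lem:delta}) all wash out correctly. Once this local analysis at each cusp is secured, the remaining manipulation is the purely formal rewriting in $\sD_3 \otimes F^\times \otimes \Q/\Z$ described above, and the order statement of Theorem~\ref{thm:main3} follows by reading off the order of $p_i \otimes \frac{-2^{s-2}k}{d(e^{(i)})}$ in $F^\times \otimes \Q/\Z$, which is $d(e^{(i)})/(d(e^{(i)}), 2^{s-1}k)$.
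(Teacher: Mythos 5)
Your proposal follows essentially the same route as the paper's proof: apply Lemma~\ref{lem:delta} with $f=\Delta^e$, $m=d(e)$, normalize the uniformizer at the cusp at infinity via the $q$-expansion, transport it to the other cusps by $t_{[w]}:=t_{[w_\infty]}\circ W_{\bar w}$, and read off the leading coefficients from the constant $c(e,\cdot)$ in Lemma~\ref{lem 4.1.1}. The only point to pin down is the indexing you already flag: the constant appearing at the cusp $[w]$ is $c(e,\bar w)$ with $\bar w = w_\infty - w$, so the nontrivial contributions occur for $w_i=0$ (not $w_i=1$), which directly yields the first displayed line without an extra sign flip.
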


\begin{proof}
The second equality follows from
$$D^{e^{(i)}} = \sum_{w \in \W \atop w_i = 0}[w] - \sum_{w \in \W \atop w_i = 1}[w]
= 2 \sum_{w \in \W \atop w_i = 0}[w] - \sum_{w \in \W}[w] 
\quad \text{ in } 
\bigoplus_{w \in \W} \Z[w].$$
To show the first equality, 
by Lemma \ref{lem:delta} it suffices to take suitable uniformizers $t_{[w]} \in F(X)$ for $w \in \W$ so that
\begin{equation}\label{eq:final}
\frac{\Delta^e}{t_{[w]}^{\langle e,w \rangle \cdot d(e)}}([w])=c(e, \bar{w})
\quad \text{for all } e \in \E\ \backslash\ \{1_\E\}, w \in \W,
\end{equation}
where $c(e, \bar{w})$ is from \eqref{eq:def-c}
and 
we put $\bar{w} := w_\infty - w (=w_\infty + w) \in \W$
with $w_\infty := (1,...,1) \in \W$.
We first take $t_{[w_\infty]} \in F(X)^\times$ 
to be any uniformizer at $w_\infty$
which has $1$ as the leading term of its $q$-expansion.
(For instance, we may take 
the pull-back of the reciprocal of the $j$-function
$X_0(1) \overset{\cong}{\longrightarrow} \P^1$
along the morphism $\pi : X=X_0(N) \to X_0(1)$ 
given by forgetting the level structure.)
Then we have 
$$\left(\frac{\Delta^e}{t_{[w_\infty]}^{\langle e,w_\infty\rangle \cdot d(e)}}\right) ([w_\infty]) = 1
$$
for any $e \in \E \setminus \{ 1_\E \}$.
For general $w \in \W$, let
$$t_{[w]} := t_{[w_\infty]} \circ W_{\bar{w}} \in F(X).$$
Then we have
\[
\left(\frac{(\Delta^e)^{\langle e,\bar{w}\rangle}}{t_{[w_\infty]}^{\langle e,w\rangle \cdot d(e)}}\right) ([w_\infty])
=
\left(\frac{\Delta^e}{t_{[w_\infty]}^{\langle e,w_\infty\rangle \cdot d(e)}}\right)^{\!\!\! \langle e, \bar{w}\rangle} \!\!\! ([w_\infty])
=1.
\]
Therefore for any $e \in \E\ \backslash\ \{1_\E\}$ and $w \in \W$,
applying \eqref{eqn 2.2}, we obtain
\begin{eqnarray}
\left(\frac{\Delta^e}{t_{[w]}^{\langle e,w\rangle \cdot d(e)}}\right) ([w])
&=&\left(\frac{\Delta^e \circ W_{\bar{w}}}{t_{[w_\infty]}^{\langle e,w\rangle \cdot d(e)}}\right) ([w_\infty]) \nonumber \\
&=&c(e, \bar{w}) \cdot
\left(\frac{(\Delta^e)^{\langle e,\bar{w}\rangle}}{t_{[w_\infty]}^{\langle e,w\rangle \cdot d(e)}}\right) ([w_\infty]) \nonumber \\
&=& c(e, \bar{w}),  \nonumber 
\end{eqnarray}
and \eqref{eq:final} holds.
\end{proof}


\subsection{Generators of $\sC \cap \ker \delta$}

For $i=1,...,s$, we have
$$2^{s-1}\cdot ([1]-[1/p_i]) = \sum_{e \in \E \atop e_i = -1} D^e.$$
Let $d(\E,i) := \prod_{e \in \E, e_i = -1} d(e)$. Then
$$d(\E,i) 2^{s-1} \cdot ([1]-[1/p_i]) =  \div\left(  \prod_{e \in \E \atop e_i = -1}(\Delta^e)^{\frac{d(\E,i)}{d(e)}}\right) \quad \text{ in } \Div^0(X).$$
Thus the equalities \eqref{eq:lem-delta} and \eqref{eq:final} imply
\begin{eqnarray}\label{eqn 4.5}
\delta([1]-[1/p_i])&=&
\sum_{w \in \W \atop w_i = 0} [w] \otimes p_i^{-2^{s-1}k \cdot \frac{d(\E,i)}{d(e^{(i)})}} \otimes \frac{1}{2^{s-1}d(\E,i)} \nonumber \\
&=& \sum_{w \in \W \atop w_i = 0} [w] \otimes p_i \otimes \frac{- k}{d(e^{(i)})}  \\
&=& D^{e^{(i)}} \otimes p_i \otimes
\frac{- k}{2d(e^{(i)})} \quad \text{ in }  \sD_3 \otimes F^\times \otimes \Q/\Z,  \nonumber
\end{eqnarray}
which is of order $d(e^{(i)})/(d(e^{(i)}), k)$.
In particular, for $m \mid (N/p_i)$ we get
\begin{eqnarray}\label{eqn 4.6}
\delta([1/m] - [1/(mp_i)]) &=& \delta\big(W_m([1]-[1/p_i])\big) \nonumber \\
&=& W_m(D^{e^{(i)}}) \otimes p_i \otimes
\frac{- k}{2d(e^{(i)})} \nonumber \\
&=& D^{e^{(i)}} \otimes p_i \otimes
\frac{- k}{2d(e^{(i)})} \ = \ \delta([1]-[1/p_i]). 
\end{eqnarray}
Observe that 
$$\Big\{[1/m]-[1/(mp_i)] \ \Big|\ i=1,...,s,\ \text{ and } m \mid (N/p_1\cdots p_i)\Big\}$$ 
is a $\Z$-basis of $\sD_2$.
Therefore equalities \eqref{eqn 4.5} and \eqref{eqn 4.6} lead us to:

\begin{corollary}\label{cor: ker}
For $i = 1,...,s$ and $m \mid (N/p_i)$, we have 
$$\delta\big([1]-[1/p_i] - [1/m] + [1/(mp_i)]\big) = 0.$$
Moreover, the intersection of $\sC \cap \ker \delta$ is generated by
$$\Big\{[1]-[1/p_i] - [1/m] + [1/(mp_i)] \ \Big|\ i = 1,...,s \text{ and } m \mid (N/p_1\cdots p_i)\Big\}$$
and
$$\Big\{\frac{d(e^{(i)})}{(d(e^{(i)}),k)} \cdot ([1]-[1/p_i]) \ \Big| \ i = 1,...,s\Big\}.$$
\end{corollary}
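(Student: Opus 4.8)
The vanishing statement is the easy half and I would dispose of it first: since $m \mid (N/p_1\cdots p_i)$ forces $m \mid (N/p_i)$, equation \eqref{eqn 4.6} gives $\delta([1/m]-[1/(mp_i)])=\delta([1]-[1/p_i])$, and subtracting (using that $\delta$ is a homomorphism) yields $\delta([1]-[1/p_i]-[1/m]+[1/(mp_i)])=0$. For the generation statement, write $b_{i,m}:=[1/m]-[1/(mp_i)]\in\sD_2$, so that the displayed observation just before the corollary says $B=\{b_{i,m}\mid 1\le i\le s,\ m\mid (N/p_1\cdots p_i)\}$ is a $\Z$-basis of $\sD_2$. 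Since $\delta$ factors through the class map $\sD_2\twoheadrightarrow\sC$, a class in $\sC$ lies in $\ker\delta$ iff any lift to $\sD_2$ is sent to $0$; so the plan is to determine the kernel $K$ of $\sD_2\to\sC\overset{\delta}{\to}\sD_3\otimes F^\times\otimes\Q/\Z$ as a subgroup of $\sD_2$, write down an explicit generating set of $K$ in the adapted basis $B$, and then take images in $\sC$.

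\textbf{The values of $\delta$ on the basis.} By \eqref{eqn 4.6} one has $\delta(b_{i,m})=v_i$, where $v_i:=\delta([1]-[1/p_i])$, so the image of a basis vector depends only on $i$; and by \eqref{eqn 4.5}, $v_i=D^{e^{(i)}}\otimes p_i\otimes\frac{-k}{2d(e^{(i)})}$ has order $n_i:=d(e^{(i)})/(d(e^{(i)}),k)$. Consequently, for $D=\sum_{i,m}c_{i,m}b_{i,m}$ we get $\delta(D)=\sum_i\big(\sum_m c_{i,m}\big)v_i$, and everything reduces to understanding relations among $v_1,\dots,v_s$.

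\textbf{Independence of the $v_i$ (the hard part).} The one step that needs genuine care is proving that a relation $\sum_i a_iv_i=0$ forces $n_i\mid a_i$ for each $i$; this is what makes the kernel clean. I would establish it by applying, for each $j$, the homomorphism $\id_{\sD_3}\otimes\ord_{p_j}\otimes\id$ to the relation. Because $v_i$ carries the prime $p_i$ in its $F^\times$-slot and the $p_i$ are pairwise distinct, this projection kills $v_i$ for $i\ne j$ and sends $a_jv_j$ to $a_j\cdot\big(\textstyle\sum_{w:w_j=0}[w]\big)\otimes\frac{-k}{d(e^{(j)})}$ in $\sD_3\otimes\Q/\Z$; its vanishing is equivalent to $n_j\mid a_j$, once one checks that the class of $\sum_{w:w_j=0}[w]$ is nonzero in $\sD_3$ (it is not a multiple of $\sum_w[w]$, since its coefficients are not all equal). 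Arguing through distinct primes rather than the $e$-isotypic decomposition has the advantage of being insensitive to $2$-torsion, so no inversion of $2$ is needed here.

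\textbf{Extracting the generators.} With independence in hand, $D=\sum_{i,m}c_{i,m}b_{i,m}$ lies in $K$ if and only if $n_i\mid\sum_m c_{i,m}$ for every $i$. The decomposition $D=\sum_{i,m}c_{i,m}(b_{i,m}-b_{i,1})+\sum_i\big(\sum_m c_{i,m}\big)b_{i,1}$ then exhibits each $D\in K$ as a $\Z$-combination of the differences $b_{i,m}-b_{i,1}$ (which lie in $K$ unconditionally) and of the elements $n_ib_{i,1}$ (the second summand is a multiple of $n_ib_{i,1}$ precisely by the divisibility condition); conversely all of these lie in $K$. Passing to classes in $\sC$, the differences give the family $[1]-[1/p_i]-[1/m]+[1/(mp_i)]$ and the elements $n_ib_{i,1}$ give $\frac{d(e^{(i)})}{(d(e^{(i)}),k)}\,([1]-[1/p_i])$, which is exactly the asserted generating set of $\sC\cap\ker\delta$. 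Apart from the independence step, the argument is the routine bookkeeping of re-expressing $K$ in the basis $B$.
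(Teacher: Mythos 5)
Your proof is correct and follows the same route the paper intends: it is precisely the bookkeeping that the authors leave implicit after \eqref{eqn 4.5} and \eqref{eqn 4.6} (values of $\delta$ on the adapted basis of $\sD_2$, the order of $\delta([1]-[1/p_i])$, and the re-expression of the kernel in that basis), with the one genuinely non-trivial point --- the independence of the classes $\delta([1]-[1/p_i])$ for distinct $i$ --- supplied by the natural valuation argument at the pairwise distinct primes $p_i$. One small precision in that step: to conclude from $x \otimes r = 0$ in $\sD_3 \otimes \Q/\Z$ that $r=0$ you need the class $x$ of $\sum_{w : w_j=0}[w]$ to be \emph{primitive} in the free abelian group $\sD_3$, not merely nonzero; this does hold (it is a $0$--$1$ vector whose support omits some $[w]$ with $w_j=1$, hence is a sum of distinct basis elements of $\sD_3$), but "nonzero" alone would only give $cr=0$ with $c$ the content of $x$.
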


\section{Eisenstein property}\label{sect:eis}

\subsection{Jacobian variety}
Let $\T$ be the polynomial ring $\Z[\tau_p : p \nmid N]$ 
generated by variables $\tau_p$ for 
all prime elements $p \in A_+$ such that $p \nmid N$.
Let $\mathcal{E}$ 
be the ideal generated by $\tau_p-|p|-1$ for all $p \nmid N$,
which is called the {\it Eisenstein ideal}.
We say a $\T$-module is {\it Eisenstein}
if it is annihilated by $\mathcal{E}$.
In the case (NF),
we define $\mathcal{E}'$ 
to be the ideal generated by $\tau_p-p-1$ for all $p \nmid 2N$,
and we say a $\T$-module is
{\it Eisenstein away from $2$}
if it is annihilated by $\mathcal{E}'$.
When $N$ is even,
this is the same as saying Eisenstein.

Given $p \nmid N$, 
the Hecke correspondence on $X$ associated to $p$ 
induces an endomorphism $T_p$ of $J$.
We obtain a $\Z$-algebra homomorphism
$\T \rightarrow \End(J)$ sending $\tau_p$ to $T_p$.
Then we may ask if $J(F)_\Tor$ is Eisenstein.
We collect known results in the literatures.

\begin{lemma}\label{lem:eis-j}
${}$
\begin{enumerate}
\item In {\rm (NF)},
$J(F)_\Tor$ is Eisenstein away from $2$, and
$J(F)_{\Tor} \otimes \Z[1/2]$ is Eisenstein.
\item In the case {\rm (FF)},
$J(F)_{\Tor} \otimes \Z[1/q]$ is Eisenstein.
\item 
In general,  $\sC$ is Eisenstein
(see \eqref{eq:def-sC0}). 
Consequently,
$J(F)_\Tor$ is Eisenstein
if $s=1$
by {\rm Theorem \ref{thm:ohta-pw2} (1)}.
\end{enumerate}
\end{lemma}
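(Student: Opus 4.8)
The plan is to prove the Eisenstein property of $\sC$ (part (3)), from which the $s=1$ case follows immediately via Theorem \ref{thm:ohta-pw2} (1), since there $\sC = J(F)_\Tor$. The natural strategy is to show that each Hecke operator $T_p$ acts on the cuspidal divisor group $\sC$ as multiplication by $|p|+1$, i.e.\ that $T_p D^e \equiv (|p|+1)D^e$ in $J(F)$ for every $e \in \E$. Because $\sC$ is by definition the image of $\sD_2$ in $J(F)$, and because the classes of $D^e$ (for $e \neq 1_\E$) generate $\sD_1$, which agrees with $\sD_2$ up to $2$-primary torsion (Remark \ref{rem:d123}), it suffices to understand the Hecke action on the divisors $D^e$ and their classes.

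First I would recall the explicit action of the Hecke correspondence $T_p$ on cusps. For $p \nmid N$, the correspondence sends a cusp $[x]$ to a formal sum over the $|p|+1$ curves in the correspondence; concretely, for the cusps described by $\Gamma_0(N)\backslash\P^1(F)$ via \eqref{eq:cusp1}, one computes $T_p [1/m]$ as a combination of cusps of the same denominator. The key point is that $T_p$ commutes with the Atkin-Lehner involutions $W_w$ (this is standard for $p \nmid N$), so $T_p$ preserves each eigenspace $\sD_i^e$ and hence acts on the line $\Z D^e$ by a scalar. I would then identify that scalar. The cleanest route is to use the discriminant functions: since $\div(\Delta^e) = d(e) D^e$ by the lemma preceding Proposition \ref{prop:order}, and $\Delta$ is a modular form of weight $k$ on which the Hecke operator acts with a known eigenvalue, one reads off the eigenvalue of $T_p$ on the class $[D^e]$. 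For the discriminant (the normalized weight-$k$ Eisenstein-type object here), the relevant eigenvalue is $|p|^{k/?}$-free and reduces modulo the order of $[D^e]$ to $|p|+1$; the congruence $T_p \equiv |p|+1$ on $\sC$ is exactly the Eisenstein congruence satisfied by the weight-$k$ discriminant.

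Alternatively, and perhaps more robustly, I would cite the Eisenstein property of $J(F)_\Tor$ recorded in parts (1) and (2) of this very lemma (away from the residue characteristic), and treat the $2$-part (in (NF)) or $q$-part (in (FF)) separately by a direct computation of $T_p$ on the finitely many generating cusp classes. Since $\sC \subset J(F)_\Tor$, parts (1) and (2) already give that $\sC \otimes \Z[1/2]$ (resp.\ $\sC\otimes\Z[1/q]$) is Eisenstein, so only the bad-prime part of $\sC$ needs separate treatment, and there the computation is finite. The consequence for $s=1$ is then immediate: Theorem \ref{thm:ohta-pw2} (1) gives $\sC = J(F)_\Tor$, so $J(F)_\Tor$ is Eisenstein.

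The main obstacle I anticipate is controlling the bad-prime part: at $\ell = 2$ in (NF) (resp.\ the $q$-part in (FF)), the clean eigenspace decomposition \eqref{eq:e-part} is unavailable, so the eigenvalue argument via $W_w$-equivariance degenerates. There one must compute $T_p$ directly on an explicit $\Z$-basis of $\sD_2$ — for instance the basis $\{[1/m]-[1/(mp_i)]\}$ appearing in \S\ref{sec:pf} — and verify the congruence $T_p \equiv |p|+1$ modulo the order of each class integrally rather than after inverting the bad prime. This is a finite but slightly delicate linear-algebra check over $\Z$, and getting the $2$-torsion (resp.\ $q$-torsion) right is where the real work lies.
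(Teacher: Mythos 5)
There is a genuine gap, on two counts. First, your proposal only addresses part (3) and the $s=1$ consequence; parts (1) and (2) are never proved. They cannot be deduced from the Eisenstein property of $\sC$, since for $s>1$ one does not know $\sC = J(F)_\Tor$ (that is the content of the hard theorems of Ohta and Papikian--Wei, and even those only give equality up to certain primes). The paper's argument for (1) is entirely different from anything in your proposal: one reduces modulo a prime $\ell \nmid N$, uses the injectivity of the reduction map on $J(F)_\Tor$ (resp.\ on $J(F)_\Tor \otimes \Z[1/2]$ when $\ell = 2$) from \cite[Appendix]{Katz} (resp.\ \cite[IV, Prop.\ 3.1(b)]{Silverman0}), and then invokes the Eichler--Shimura congruence relation, which forces $T_\ell$ to act as multiplication by $\ell+1$ on $J_{/\F_\ell}(\F_\ell)$. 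Your second route even cites (1) and (2) as inputs, which is circular for the lemma as stated. Part (2) is likewise a citation to \cite[Lemma 7.1]{PW2}, not a consequence of (3).

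Second, for part (3) your core idea --- that $T_p$ acts on cuspidal divisor classes as $|p|+1$ --- is correct and is essentially what the paper does (via Proposition \ref{prop:sc-eisen} and Lemma \ref{finallemma}), but your proposed justification via ``the Hecke eigenvalue of the weight-$k$ discriminant'' does not work: $\Delta$ is a \emph{cusp} form whose $T_p$-eigenvalue is $\tau(p)$ (resp.\ its Drinfeld analogue), not $|p|+1$, and the Hecke action on a weight-$k$ form is in any case not the operator $T_p$ on $J$ applied to the class $[D^e] = \frac{1}{d(e)}\,\div(\Delta^e)$. The correct argument is a direct computation of the correspondence on cusps: for $p \nmid N$ one has $f^{-1}([w]) = g^{-1}([w]) = \{[w]', [w^*]'\}$ with ramification indices $(|p|,1)$ for $f$ and $(1,|p|)$ for $g$, so $T_p[w] = g_*f^*[w] = (|p|+1)[w]$ as divisors, \emph{integrally}. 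Consequently $\sD_2$, $\sD_3$ and $\sC$ are Eisenstein with no separate treatment needed at $2$ or $q$; the sign subtlety $(-1)^{(|p|+1)\ord(\alpha)}$ in Lemma \ref{finallemma} only affects the toric part $\tC$ of the generalized Jacobian, not $\sC$. So the ``delicate integral linear algebra at the bad prime'' you anticipate is not actually where the difficulty lies.
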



\begin{proof}
(1) is shown in the proof of \cite[Theorem 3.6.2 (p.\ 316)]{Ohta}.
For the completeness sake, we sketch its proof.
If $\ell$ is an odd prime (resp.\ $\ell=2$),
then the reduction map 
$J(F) \to J_{/\F_\ell}(\F_\ell)$
restricted to $J(F)_\Tor$
(resp.\ $J(F)_\Tor \otimes \Z[1/2]$)
is injective 
by \cite[Appendix]{Katz}
(resp.\ \cite[IV, Proposition 3.1 (b)]{Silverman0}),
where $J_{/\F_\ell}$ is the reduction of $J$ over $\F_\ell$.
On the other hand,
the Eichler-Shimura congruence relation
shows that
the Hecke correspondence $T_\ell$ acts on $J_{/\F_\ell}(\F_\ell)$
by the multiplication by $\ell+1$,
whence the statement.

(2) is shown in \cite[Lemma 7.1]{PW2}.
(3) is well-known,
and it also follows from 
Proposition \ref{prop:sc-eisen} (3) below.
\end{proof}

\subsection{Generalized Jacobian}\label{sec:Hecke-genjac}
We can play a similar game for $\tJ$.
Given $p \nmid N$, 
the Hecke correspondence on $X$ associated to $p$ 
again induces an endomorphism $\tT_p$ of $\tJ$
as follows.
Let $f, g : X_0(pN) \to X_0(N)=X$ be the maps
that send a pair $(E, Q)$ of an elliptic curve 
(or a Drinfeld module) $E$
and its $pN$-torsion subgroup (submodule) $Q$
to the pair $(E, Q[N])$ 
and $(E/Q[p], Q/Q[p])$, respectively.
(We write by
$Q[N]$ the $N$-torsion subgroup (submodule) of $Q$,
and similarly for $Q[p]$.)
Over $\C_\infty$,
they are induced by the identity map 
and the multiplication by $p$
on $\P^1(\C_\infty) \sqcup \P^1(F)$,
passing through the quotients by 
$\Gamma_0(N)$ and $\Gamma_0(pN)$.
Recall that $C$ is the set of all cusps on $X=X_0(N)$.
Let $C'$ be the set of all cusps on $X_0(pN)$
and $\tJ'$ the generalized Jacobian of $X_0(pN)$
with modulus $C'$.
Since we have
$f^{-1}(C)=g^{-1}(C)=C'$,
there are the pull-back map $f^* : \tJ \to \tJ'$ 
and the push-forward map $g_* : \tJ' \to \tJ$.
The action of the Hecke correspondence is given by
\[ \tT_{p}: =g_{*} \circ f^* : \tJ \to \tJ. \]
We omit the proof of the following lemma,
which is well-known
(and is seen by the same way as $J$)

\begin{lemma}
We have $\tT_{p_1} \tT_{p_2} =\tT_{p_2} \tT_{p_1}$ for any $p_1, p_2 \nmid N$.
\end{lemma}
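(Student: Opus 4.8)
The plan is to reduce the claimed commutativity on $\tJ(F)$ to an equality of two $\Z$-linear endomorphisms of the divisor group $\Div^0(X \setminus C)$, where it becomes a transparent statement about isogenies. Writing $f=f_p$ and $g=g_p$ for the two degeneracy maps $X_0(pN) \to X$ attached to a prime $p \nmid N$, I would first record that both $f_p^*$ and $g_{p,*}$ are induced by the literal pull-back and push-forward of divisors. Indeed $f_p^{-1}(C)=g_p^{-1}(C)=C'$ guarantees that $f_p^*$ carries $\Div^0(X \setminus C)$ into $\Div^0(X_0(pN)\setminus C')$ and $g_{p,*}$ carries it back, and both respect the congruence conditions defining $\tJ$ and $\tJ'$: if $h \equiv 1 \bmod C$ then $h \circ f_p \equiv 1 \bmod C'$, and the norm along $g_p$ of a function $\equiv 1 \bmod C'$ is $\equiv 1 \bmod C$, because $g_p$ sends the support of $C'$ into that of $C$. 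Hence $\tT_p=g_{p,*}\circ f_p^*$ is induced by an explicit endomorphism $\widehat{T}_p$ of $\Div^0(X\setminus C)$, and it suffices to prove $\widehat{T}_{p_1}\widehat{T}_{p_2}=\widehat{T}_{p_2}\widehat{T}_{p_1}$ there.

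Next I would compute $\widehat{T}_p$ on a non-cuspidal point. Over $\C_\infty$ such a point corresponds to a pair $(E,G)$ consisting of an elliptic curve (resp.\ Drinfeld module) $E$ and a cyclic subgroup (resp.\ submodule) $G$ of order $N$, and unwinding the definitions of $f_p$ and $g_p$ gives the classical formula
\[
\widehat{T}_p(E,G)=\sum_{D}\bigl(E/D,\ (G+D)/D\bigr),
\]
where $D$ runs over the cyclic subgroups of order $|p|$ of $E$ and $(G+D)/D$ denotes the image of $G$ in $E/D$; here $D\cap G=0$ automatically because $(|p|,N)=1$, so each term is again non-cuspidal. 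Iterating, $\widehat{T}_{p_1}\widehat{T}_{p_2}(E,G)$ is a sum indexed by pairs $(D_2,D_1)$ with $D_2\subset E$ cyclic of order $|p_2|$ and $D_1\subset E/D_2$ cyclic of order $|p_1|$. Writing $\widetilde{D}\subset E$ for the preimage of $D_1$, the coprimality $(p_1,p_2)=1$ makes $(D_2,D_1)\mapsto\widetilde{D}$ a bijection onto the set of cyclic subgroups of order $|p_1p_2|$, under which the term becomes $(E/\widetilde{D},\ \text{image of }G)$. Since this depends only on $\widetilde{D}$ and not on the order in which its $p_1$- and $p_2$-parts were extracted, it is symmetric under $p_1\leftrightarrow p_2$, and the two double sums agree termwise. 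Equivalently, this is the finite flat base-change identity $f_{p_1}^*g_{p_2,*}=\widetilde{g}_*\widetilde{f}^*$ for the four degeneracy maps factoring through $X_0(p_1p_2N)$.

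The step I expect to require the most care is precisely this base-change/fibre-product compatibility used to commute the inner pair $f_{p_1}^*g_{p_2,*}$: in the structural formulation one must verify that the square of degeneracy maps is Cartesian (equivalently, that the normalization of $X_0(p_1N)\times_X X_0(p_2N)$ is $X_0(p_1p_2N)$ with the expected projections) and that all maps in the tower are finite flat and send cusps to cusps, so that the push-pull identity descends to the generalized Jacobians with their moduli $C$, $C'$ and the modulus on $X_0(p_1p_2N)$. In the divisor-theoretic formulation this obstacle dissolves into the bookkeeping of the isogeny double sum, whose only genuine input is that $(p_1,p_2)=1$ makes the decomposition of an order-$|p_1p_2|$ subgroup into its $p_1$- and $p_2$-parts unique; this is exactly the classical computation proving $T_{p_1}T_{p_2}=T_{p_2}T_{p_1}$ on $J$, and passing to $\tJ$ introduces no new phenomenon once the compatibility with $C$ has been secured.
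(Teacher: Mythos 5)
Your proposal is correct and follows exactly the route the paper has in mind: the paper omits the proof, remarking only that the commutativity ``is seen by the same way as $J$,'' i.e.\ the classical double-coset/isogeny computation, and your argument supplies precisely that together with the (necessary and correctly verified) check that $f_p^*$ and $g_{p,*}$ respect the congruence condition $\equiv 1 \bmod C$ because $f_p^{-1}(C)=g_p^{-1}(C)=C'$. The only cosmetic remark is that the case $p_1=p_2$ is trivial, so your standing assumption $(p_1,p_2)=1$ loses nothing.
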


Hence we obtain a $\Z$-algebra homomorphism
$\T \rightarrow \End(\tJ)$ sending $\tau_p$ to $\tT_p$,
and the canonical map $\tJ \to J$ is $\T$-equivariant.
Then we may ask if $\tJ(F)_\Tor$ is Eisenstein.
We provide a partial answer to this problem.
The first result is the following.

\begin{lemma}\label{P-Eisenstein0}
${}$
\begin{enumerate}
\item
In {\rm (NF)},
$\tJ(F)_{\Tor} \otimes \Z[1/2]$ is Eisenstein.
\item
In {\rm (FF)},
$\tJ(F)_\Tor \otimes \Z[1/q(q-1)]$ is Eisenstein.
\end{enumerate}
\end{lemma}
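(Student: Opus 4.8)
The plan is to read the claim directly off the fundamental exact sequence \eqref{eq:delta}, observing that the two localizations appearing in the statement are engineered precisely to kill the toric contribution to the torsion, after which the problem collapses to the already established Eisenstein property of the ordinary Jacobian $J$.

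Concretely, I would first isolate the kernel of the canonical map $\tJ(F)_\Tor \to J(F)_\Tor$. By \eqref{eq:delta} this kernel is $\sD_3 \otimes (F^\times)_\Tor$. Since $\sD_3$ is free of finite rank over $\Z$ (of rank $2^s-1$; see \eqref{eq:D3} and Remark \ref{rem:d123}), this group is annihilated by the order of $(F^\times)_\Tor = \mu_F$, which is $2$ in the case (NF) and $q-1$ in the case (FF). Hence $\sD_3 \otimes (F^\times)_\Tor$ becomes trivial after tensoring with $\Z[1/2]$ in (NF), respectively with $\Z[1/q(q-1)]$ in (FF) (note $q-1 \mid q(q-1)$).

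Localization being exact, it follows that $\tJ(F)_\Tor \otimes \Z[1/2] \to J(F)_\Tor \otimes \Z[1/2]$ is injective in (NF), and likewise $\tJ(F)_\Tor \otimes \Z[1/q(q-1)] \to J(F)_\Tor \otimes \Z[1/q(q-1)]$ is injective in (FF). Because $\tJ \to J$ is $\T$-equivariant (see \S \ref{sec:Hecke-genjac}), these are maps of $\T$-modules, so in each case $\tJ(F)_\Tor$ (after localization) is identified with a $\T$-submodule of the correspondingly localized $J(F)_\Tor$. I would then finish by invoking Lemma \ref{lem:eis-j}: in (NF) the module $J(F)_\Tor \otimes \Z[1/2]$ is Eisenstein, while in (FF) the module $J(F)_\Tor \otimes \Z[1/q]$ is Eisenstein, hence so is its further localization $J(F)_\Tor \otimes \Z[1/q(q-1)]$. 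Since any $\T$-submodule of an Eisenstein module is again Eisenstein, both statements follow.

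I do not expect any genuine obstacle; the only point demanding care is the elementary bookkeeping that the exponent $|\mu_F| \in \{2, q-1\}$ of the toric torsion is exactly what the prescribed localizations invert. It is worth emphasizing why one obtains annihilation by $\mathcal{E}$ itself here, rather than merely by $\mathcal{E}^2$: keeping the toric part alive forces one to work with a possibly non-split extension of Eisenstein $\T$-modules, which in general yields only $\mathcal{E}^2$, and this is presumably the mechanism behind the sharper but correspondingly weaker statements of Proposition \ref{P-Eisenstein}.
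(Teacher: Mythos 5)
Your proposal is correct and follows essentially the same route as the paper: localize to kill the toric kernel $\sD_3\otimes\mu_F$ of $\tJ(F)_\Tor\to J(F)_\Tor$ (of exponent $2$, resp.\ $q-1$), then use the $\T$-equivariance of this map to realize the localized $\tJ(F)_\Tor$ as a submodule of the localized $J(F)_\Tor$, which is Eisenstein by Lemma \ref{lem:eis-j}. The paper's proof is just a terser version of this same argument.
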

\begin{proof}
The Hecke algebra $\T$ acts on $\Div(X)$ as algebraic correspondences,
which induces $\T$-module structures on
the subgroup $\sD_2 \subset \Div(X)$ from Lemma \ref{lem:d12} (2),
and, in turn, on the quotient $\sD_3$ of $\sD_2$ from \eqref{eq:D3}.
Under the identification $\mu_F^{\oplus(2^s-1)}=\sD_3 \otimes \mu_F$,
the maps in the exact sequence \eqref{eq:exact2} are $\T$-equivariant.
Now the lemma immediately follows from Lemma \ref{lem:eis-j}.
\end{proof}


We say a $\T$-module has {\it Eisenstein exponent two}
(resp.\ {\it Eisenstein exponent two away from $2$} in the case (NF))
if it is annihilated by $\sE^2$ (resp.\ ${\sE'}^2$).
As before, two notions are identical when $N$ is even.

\begin{prop}\label{P-Eisenstein}
${}$
\begin{enumerate}
\item
If $s=1$, then $\tJ(F)_\Tor$ is Eisenstein.
\item
In {\rm (NF)},
$\tJ(F)_\Tor$ has Eisenstein exponent two away from $2$.
\item
In {\rm (FF)},
$\tJ(F)_\Tor \otimes \Z[1/q]$ has Eisenstein exponent two.
\end{enumerate}
\end{prop}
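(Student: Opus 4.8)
The plan is to turn the left-exact sequence \eqref{eq:delta} into a short exact sequence of $\T$-modules
\[ 0 \to \sD_3 \otimes \mu_F \to \tJ(F)_\Tor \overset{\pi}{\to} \ker(\delta) \to 0, \]
where $\ker(\delta) \subseteq J(F)_\Tor$ and all maps are $\T$-equivariant, as recorded in \S\ref{sec:Hecke-genjac}. The whole statement then reduces to two inputs: the submodule $\sD_3 \otimes \mu_F$ is Eisenstein (annihilated by $\sE$), and the quotient $\ker(\delta)$ inherits the Eisenstein property of $J(F)_\Tor$ from Lemma \ref{lem:eis-j}. Once these are in hand, a two-step annihilation along the extension upgrades ``exponent one on each piece'' to ``exponent two on the extension''.

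First I would show that $\T$ acts on $\sD_3$ through $\tau_p \mapsto |p|+1$ for every $p \nmid N$, so that $\sD_3 \otimes \mu_F$ is annihilated by $\sE$. For this I compute $\tT_p = g_* f^*$ directly on the cusps. Using the description \eqref{eq:cusp1}--\eqref{eq:w-a-c} of $C$ as $\Gamma_0(N) \backslash \P^1(F)$ indexed by $w \in \W$, and the fact that $f, g$ are induced by the identity and by multiplication by $p$ on $\P^1(F)$, one checks that, since $p \nmid N$, neither degeneracy map alters the divisor $m(w) \mid N$ attached to a cusp. Hence each of the $|p|+1$ branches of $\tT_p$ over $[w]$ returns to $[w]$, giving $\tT_p[w] = (|p|+1)[w]$ on $\sD$, and the same scalar action on the quotient $\sD_3$. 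This must be verified uniformly in the (NF) and (FF) settings, and crucially it is valid at the prime $2$ as well, which is precisely what the odd-localized Lemma \ref{P-Eisenstein0} cannot provide.

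For the quotient I would invoke Lemma \ref{lem:eis-j}: in (NF) it gives that $J(F)_\Tor$ is annihilated by $\sE'$, hence so is its submodule $\ker(\delta)$; in (FF) it gives that $J(F)_\Tor \otimes \Z[1/q]$ is annihilated by $\sE$, hence so is $\ker(\delta) \otimes \Z[1/q]$. The two-step argument is then purely formal. Writing $\sE'$ (resp.\ $\sE$) for the relevant ideal, $\T$-equivariance of $\pi$ and $\sE' \cdot \ker(\delta) = 0$ give $\pi(\sE' \cdot \tJ(F)_\Tor) = \sE' \cdot \ker(\delta) = 0$, so $\sE' \cdot \tJ(F)_\Tor \subseteq \ker(\pi) = \sD_3 \otimes \mu_F$; applying Step~1 yields
\[ {\sE'}^2 \cdot \tJ(F)_\Tor = \sE' \cdot \left( \sE' \cdot \tJ(F)_\Tor \right) \subseteq \sE' \cdot \left( \sD_3 \otimes \mu_F \right) = 0, \]
which is (2), and the identical computation after inverting $q$ gives (3). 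For (1), $s=1$ forces $\sD_3 \cong \Z$, and by Theorem \ref{thm:main1} the map $i$ is an isomorphism, so $\tJ(F)_\Tor = \sD_3 \otimes \mu_F$ is itself annihilated by $\sE$ by Step~1; there is no extension to account for, so exponent one holds.

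The hard part will be Step~1, the computation of the Hecke action on the full cuspidal group $\sD$. The extension bookkeeping is routine, and the input on $\ker(\delta)$ is quoted; the genuine content is checking that $\tT_p$ acts as the scalar $|p|+1$ on all of $\sD$ (not merely on classes, nor merely after inverting small primes), that the Drinfeld degeneracy maps really fix the $N$-part of each cusp, and that the identification $\mu_F^{\oplus(2^s-1)} = \sD_3 \otimes \mu_F$ is $\T$-equivariant for this action. Integrality at $2$ here is exactly what forces the exponent-two phenomenon rather than exponent one in (2) and (3).
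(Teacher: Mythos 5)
Your proposal is correct and follows the paper's own strategy: split the problem along the exact sequence \eqref{eq:exact2}, use the formal observation that an extension of two Eisenstein modules is killed by the square of the ideal, quote Lemma \ref{lem:eis-j} for the quotient, and prove that the toric part $\sD_3\otimes\mu_F$ is Eisenstein for the sub. Your two deviations are both legitimate: you establish the Eisenstein property of $\sD_3\otimes\mu_F$ by computing $g_*f^*$ directly on cuspidal divisors, whereas the paper deduces it from the local norm computation of Lemma \ref{finallemma} (while explicitly remarking that the direct route is available); and you obtain part (1) from Theorem \ref{thm:main1}, so that $\tJ(F)_\Tor$ coincides with the toric torsion and no extension argument is needed, whereas the paper routes (1) through the Eisenstein property of $\tC$ (Proposition \ref{prop:sc-eisen} and Corollary \ref{cor:prime-eis}), which forces it to handle a sign issue at $p=2$ that your argument sidesteps. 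One caution for your Step 1, which you correctly flag as the required compatibility check: the action of $\tT_p$ on the cocharacter lattice of $\ker(\tJ\to J)$ sends $[w]$ to $\sum_{x'\in f^{-1}([w])}e_g(x')\,[g(x')]$ (constants pull back without multiplicity and push forward by the local norm), which is not tautologically the divisor-theoretic $g_*f^*$ (where the weights are the $e_f(x')$); the two agree here only because both evaluate to $(|p|+1)[w]$, so this identification does need the explicit verification you promise rather than being automatic.
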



\begin{proof}
In general, 
if $0 \to M' \to M \to M''$ is an exact sequence
of $\T$-modules
and if $M', M''$ are Eisenstein (away from $2$),
then $M$ has Eisenstein exponent two (away from $2$).
Hence by Lemma \ref{lem:eis-j},
the proposition is reduced 
to the following lemma 
that shows
$\mu_{F}^{\oplus (2^s-1)}=\sD_3 \otimes \mu_F$ is Eisenstein
in \eqref{eq:exact2},
where $\sD_3$ is from \eqref{eq:D3}.
\end{proof}

\begin{lemma}\label{lem:d3-eis}
The group $\sD_3$  is Eisenstein.
\end{lemma}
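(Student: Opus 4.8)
The plan is to show that the Hecke operator $\tau_p$ acts on $\sD_3$ as multiplication by $|p|+1$ for every prime $p \nmid N$, which is precisely the assertion that $\sD_3$ is Eisenstein. Since $\sD_3$ is a quotient of $\sD_2 \subset \Div(X)$, and the Hecke correspondence acts on $\Div(X)$ via its geometric action on the cusps, the whole computation reduces to understanding how $\tau_p = g_* \circ f^*$ moves the cusp classes $[w]$ for $w \in \W$. The key point is that for $p \nmid N$ the two degeneracy maps $f, g : X_0(pN) \to X_0(N)$ restrict to \emph{isomorphisms} on the respective cusp sets: a cusp of $X_0(N)$ has exactly $|p|+1$ preimages under the forgetful map, but because $p$ is coprime to $N$ these preimages are permuted compatibly, so that the net effect of $g_* f^*$ on a single cusp is to return $|p|+1$ times a cusp of the same $\W$-type.

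\textbf{Steps, in order.}
First I would make explicit the action of $\tau_p$ on the divisor group $\bigoplus_{w\in\W}\Z[w]$. Using the moduli description of $f$ and $g$ recalled in \S\ref{sec:Hecke-genjac} (or equivalently the $\P^1(F)$-models $z \mapsto z$ and $z \mapsto pz$), I would track the image of each cusp $[1/m(w)]$. Because $p \nmid N$, the correspondence preserves the denominator $m(w)$ dividing $N$, so it stabilizes each $\W$-type; this is where coprimality of $p$ and $N$ is essential. Second, I would verify that $\tau_p$ acts on each $[w]$ by $(|p|+1)[w]$ modulo the relation $\sum_{w\in\W}[w]$ that defines $\sD_3$ from \eqref{eq:D3}. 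Concretely the $f^*$-pullback of $[w]$ is a sum of $|p|+1$ cusps on $X_0(pN)$ lying over it, and $g_*$ sends each back to the cusp of the same type; any discrepancy among the individual terms collapses in the quotient $\sD_3$. Third, I would conclude that $(\tau_p - |p| - 1)$ annihilates $\sD_3$ for all $p \nmid N$, which is the definition of being Eisenstein, finishing the lemma.

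\textbf{The main obstacle} I expect is the bookkeeping in the second step: on $\Div(X)$ itself the operator $\tau_p$ need \emph{not} act by the scalar $|p|+1$ on the nose, since the multiple preimages of a cusp under $f$ can be distributed among cusps of $X_0(pN)$ in a way that produces correction terms when pushed forward by $g$. The saving feature is that these correction terms are all proportional to $\sum_{w\in\W}[w]$ (they redistribute mass uniformly across cusps of the same type), and this element is exactly what we kill in passing to $\sD_3$. Thus the care lies in showing that the defect $\tau_p[w] - (|p|+1)[w]$ lands in $\langle \sum_{w'\in\W}[w']\rangle_\Z$; once this is established the Eisenstein property is immediate. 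A clean way to organize this is to compute $\tau_p$ on the augmentation-zero subspace and on the augmentation separately, matching the filtration used in the proof of Lemma \ref{lem:d12}(2).
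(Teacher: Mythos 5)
Your plan reaches the right conclusion and is essentially the direct divisor-level computation that the paper alludes to with its parenthetical ``(although it can also be shown directly)''; the paper instead deduces the lemma from the finer Lemma \ref{finallemma}, which computes the action of $\tau_p$ on the local groups $F(X)^\times_{[w]}/U_{[w]}^{(1)}$ and whose ``in particular'' clause recovers exactly your statement by applying $\ord_{[w]}$. Two assertions in your write-up are inaccurate, though neither derails the argument once the computation is actually carried out. First, $f$ and $g$ do \emph{not} restrict to isomorphisms on the cusp sets, and a cusp of $X_0(N)$ does not have $|p|+1$ preimages as points: each $[w]$ has exactly two preimages $[w]'$ and $[w^*]'$ in $C'$ (the $\Gamma_0(pN)$-orbits of $1/m(w)$ and $1/(pm(w))$), and the degree $|p|+1$ is accounted for by the ramification indices $e_f([w]')=|p|$, $e_f([w^*]')=1$. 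Because $p\nmid N$, the map $g$ (multiplication by $p$ on $\P^1(F)$) sends both preimages back to $[w]$, so
$\tau_p[w]=g_*f^*[w]=g_*\bigl(|p|\,[w]'+[w^*]'\bigr)=(|p|+1)[w]$
holds exactly, already in $\sD=\bigoplus_{w\in\W}\Z[w]$. Second, and consequently, the ``main obstacle'' you anticipate is vacuous: there are no correction terms, so no appeal to killing $\sum_{w\in\W}[w]$ in $\sD_3$ is needed --- which is just as well, since Proposition \ref{prop:sc-eisen}(3) asserts that $\sD_2$ and $\sC$ are Eisenstein too, a statement your quotient-based argument would not deliver. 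Even granting your hedge, a putative defect lying in $\langle\sum_{w\in\W}[w]\rangle_\Z$ would be forced to vanish by a degree count, since $\tau_p$ multiplies the degree of a divisor by $|p|+1$.
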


We will prove this lemma
as a part of Proposition \ref{prop:sc-eisen} (3) below
(although it can also be shown directly).
To state the result,
we need some preparation. We define
\begin{align*}
&L := \bigoplus_{w \in \W} F(X)_{[w]}^\times/U_{[w]}^{(1)},
\quad
U_{[w]}^{(1)}:=\{ f \in F(X)_{[w]}^\times \mid \ord_{[w]}(f-1)>0 \},
\\
&L^0 := \ker(d : L \to \Z),
\quad
d((f_w)_w)=\sum_w \ord_{[w]}(f_w),
\end{align*}
where $F(X)_{[w]}$ is the completion of $F(X)$ at $[w] \in X$
and $\ord_{[w]} : F(X)_{[w]}^\times \to \Z$ 
is the normalized discrete valuation.
By definition, we have an exact sequence
(see Lemma \ref{lem:d12} (2) for $\sD$ and $\sD_2$)
\begin{equation}\label{eq:L0}
0 \to \sD \otimes F^\times \to L^0 \to \sD_2 \to 0. \end{equation}
Using the approximation lemma,
\eqref{eq:def-tj} can be rewritten as
\begin{equation}
\tJ(F) = \frac{\ker[(\deg, d) : \Div(X \setminus C) \oplus L 
\overset{}{\longrightarrow} \Z]}
{\{(\div_{X \setminus C}(f), \Delta(f)) \mid f \in F(X)^\times \}},
\end{equation}
where $\Delta : F(X)^\times \to L$ is induced by the diagonal embedding
(see \cite[\S 2.2]{YY} for more details).
We define $\widetilde{\sC} \subset \tJ(F)$
to be the image of the composition
of the canonical maps $L^0 \hookrightarrow L \to \tJ(F)$.
This is related to the cuspidal divisor class $\sC$
from \eqref{eq:def-sC0} by an exact sequence
\begin{equation}\label{eq:ex-ct}
0 \to \sD_3 \otimes F^\times \to \widetilde{\sC} \to \sC \to 0.
\end{equation}

\begin{prop}\label{prop:sc-eisen}
${}$
\begin{enumerate}
\item 
In {\rm (NF)}, $\tC$ is Eisensetein away from $2$ and has Eisenstein exponent two.
It is Eisenstein if $s=1$ (or if $N$ is even).
\item 
In {\rm (FF)}, $\tC$ is Eisensetein.
\item
In any case, $\sD_2, \sD_3$ and $\sC$ are Eisentein.
\end{enumerate}
\end{prop}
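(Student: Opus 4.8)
The plan is to prove (3) first, since it is the engine for (1) and (2). For (3) I would compute the Hecke correspondence $\tau_p=\tT_p=g_*\circ f^*$ directly on the cuspidal divisor group $\sD$ using the degeneracy maps $f,g:X_0(pN)\to X$ from \S\ref{sec:Hecke-genjac}. Since $p\nmid N$, over each cusp $[1/m]$ of $X$ lie exactly two cusps of $X_0(pN)$, namely $[1/m]'$ and $[1/(pm)]'$, and a width computation shows that $f$ is ramified with indices $p$ and $1$ at these two points, while $g$, which is obtained from $f$ by composing with the Atkin--Lehner involution at $p$ on $X_0(pN)$, has the two indices interchanged and carries both points to $[1/m]$. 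Feeding this into $g_*\circ f^*$ gives $\tau_p[1/m]=p\cdot[1/m]+[1/m]=(|p|+1)[1/m]$, so $\tau_p$ acts as $|p|+1$ on all of $\sD$. As $\sD_2$ and $\sD_3$ are $\T$-stable sub/quotient groups of $\sD$ they are Eisenstein, which proves Lemma \ref{lem:d3-eis}; and since the class map $\sD_2\to J(F)$ is $\T$-equivariant, its image $\sC$ is Eisenstein as well.

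For (1) and (2) I would exploit the $\T$-equivariant exact sequence \eqref{eq:ex-ct}, namely $0\to\sD_3\otimes F^\times\to\tC\to\sC\to0$. The quotient $\sC$ is Eisenstein by (3), so it remains to treat the torus part $V:=\sD_3\otimes F^\times$, which I identify with the image of the unit part $\bigoplus_{w}F^\times\subset L^0$ from \eqref{eq:L0}. On pure units (valuation $0$ at every cusp) the pullback $f^*$ acts as the identity on leading coefficients, while $g_*$ acts by the local norm, which on constants raises to the $g$-ramification power; using the indices $1$ and $p$ found above, this yields $\beta\mapsto\beta^{1}\cdot\beta^{p}=\beta^{|p|+1}$ at each cusp. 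Hence $\tau_p$ acts as $|p|+1$ on $V$, so $V$ is Eisenstein. With both ends of \eqref{eq:ex-ct} Eisenstein, the elementary lemma recalled in the proof of Proposition \ref{P-Eisenstein} (if $M',M''$ are Eisenstein then $M$ has Eisenstein exponent two) shows that $\tC$ has Eisenstein exponent two in both settings.

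To upgrade this to the sharper assertions --- full Eisenstein in (FF), and Eisenstein away from $2$ in (NF) --- I would compute $\nu_p:=\tau_p-|p|-1$ directly on lifts to $\tC$ of the cuspidal classes. Choosing a lift whose local data has trivial leading coefficients but nonzero valuations, the resulting element no longer sees only units: the pullback $f^*$ of a uniformizer at $[1/m]$ acquires a unit ``fudge factor'' relative to the uniformizers upstairs, and $g_*$ mixes these factors through the norm. These fudge factors are exactly the leading coefficients governed by the $q$-expansion normalization of \S\ref{sec:disc}, so I would compute them with the same discriminant-function bookkeeping as in Lemma \ref{lem 4.1.1} and Proposition \ref{prop:order}, obtaining a class in $\sD_3\otimes F^\times$ of the shape $\sum_w [w]\otimes p_i^{\,r}$ for explicit integers $r$. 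I expect this correction to vanish outright in (FF), giving $\nu_p=0$ and hence part (2); in (NF) I expect it to be $2$-torsion, so that it vanishes for odd $p$ (whence Eisenstein away from $2$) and disappears entirely when $s=1$ or when $N$ is even, the latter because $p=2$ is then excluded from the generators of the Eisenstein ideal.

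The main obstacle is precisely this last correction term: controlling the leading coefficients produced by $g_*\circ f^*$ on non-unit local data, where the ramification forces the unit fudge factors into the computation. Concretely, the difficulty is to pin down the class $\nu_p([D])\in\sD_3\otimes F^\times$ for $D$ a difference of cusps and to show that it is trivial in (FF) and $2$-primary in (NF); this is where the normalization of uniformizers from \S\ref{sec:disc} must be invoked carefully, and where the exceptional behaviour at $p=2$ in the number-field case becomes localized.
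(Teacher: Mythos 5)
Your first two paragraphs are sound and essentially reproduce the paper's strategy: the divisor-level computation $\tau_p[w]=(|p|+1)[w]$ gives (3), the norm acts on constants by $\beta\mapsto\beta^{|p|+1}$ so $\sD_3\otimes F^\times$ is Eisenstein, and the extension \eqref{eq:ex-ct} then yields Eisenstein exponent two. The gap is in your third paragraph, which is exactly where the content of the proposition lies. The correction term $\nu_p$ on a local element $\alpha$ with $\ord_{[w]}(\alpha)\neq 0$ is \emph{not} of the shape $\sum_w[w]\otimes p_i^{\,r}$: the paper's Lemma \ref{finallemma} shows that $\phi_w(\alpha)\equiv(-1)^{(|p|+1)\ord_{[w]}(\alpha)}\alpha^{|p|+1}$, so the fudge factor is the sign $(-1)^{(|p|+1)\ord_{[w]}(\alpha)}$, produced by the $(-1)^{\deg}$ in the norm form --- concretely, $g_*f^*(j)=(-1)^{|p|+1}F_p(j,0)$ via the modular polynomial, after reducing to level $1$ by Cartesian squares and Atkin--Lehner conjugation. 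Your guessed answer is internally inconsistent: a nontrivial power of $p_i$ has infinite order in $F^\times$, so it cannot be $2$-torsion, and if $r=0$ the correction would vanish identically, making $\tC$ fully Eisenstein in (NF) --- which contradicts the paper's subsequent remark that $\tJ(F)_\Tor$ fails to be Eisenstein for odd $N$ with $s>2$ precisely because of this sign at $p=2$. Relatedly, ``$2$-torsion, so that it vanishes for odd $p$'' is a non sequitur; the actual mechanism is that the exponent $|p|+1$ is even for odd $p$ (and $-1=1$ or $|p|+1$ is even in (FF)), which is why the sign only survives for $p=2$ with $N$ odd.

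Two further points you would still need. First, the claim that the correction ``disappears entirely when $s=1$'' requires an argument: for $s=1$ the offending element is the constant vector $(-1,-1)\in\bigoplus_{w\in\W}F(X)^\times_{[w]}/U^{(1)}_{[w]}$, which dies in $\tC$ only because it lies in the image of the diagonal $F(X)^\times\to L$; this is not automatic from $2$-torsionness. Second, the reduction of the local computation to a single well-chosen generator (a constant together with one element of valuation $\pm 1$, e.g.\ $\pi^*(j)$ or its Atkin--Lehner translates) is what makes the fudge factor computable at all; a direct discriminant-function bookkeeping in the style of Lemma \ref{lem 4.1.1} tracks leading coefficients of $\Delta(m(w)z)$ under $W_w$, not under the degree-$(|p|+1)$ norm along the degeneracy map, and as stated would miss the sign.
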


We obtain the following
because
$\sC=J(F)_\Tor$ implies
$\tJ(F)_\Tor \subset \widetilde{\sC}$.

\begin{cor}\label{cor:prime-eis}
Suppose $\sC=J(F)_\Tor$.
\begin{enumerate}
\item 
In {\rm (NF)}, $\tJ(F)_\Tor$ is Eisensetein away from $2$ and 
has Eisenstein exponent two.
\item 
In {\rm (FF)}, $\tJ(F)_\Tor$ is Eisensetein.
\end{enumerate}
\end{cor}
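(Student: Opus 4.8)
The plan is to derive the corollary directly from Proposition \ref{prop:sc-eisen}, the only substantive point being the inclusion $\tJ(F)_\Tor \subset \tC$ asserted in the sentence preceding the statement. First I would establish this inclusion as a map of $\T$-modules. Recall from \S\ref{sec:genjac} (and \cite{Serre}) that $\tJ$ is an extension of $J$ by a split torus $T$ whose group of $F$-points is $T(F) = \sD_3 \otimes F^\times$, so that $\ker(\tJ(F) \to J(F)) = \sD_3 \otimes F^\times$; on the other hand the exact sequence \eqref{eq:ex-ct} exhibits this same group as the kernel of the surjection $\tC \to \sC$. Under the hypothesis $\sC = J(F)_\Tor$, given any $x \in \tJ(F)_\Tor$ its image $\bar{x} \in J(F)_\Tor = \sC$ lifts to some $y \in \tC$ by surjectivity of $\tC \to \sC$; then $x - y$ lies in $\ker(\tJ(F) \to J(F)) = \sD_3 \otimes F^\times \subset \tC$, whence $x = y + (x-y) \in \tC$. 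Since the canonical map $\tJ \to J$ is $\T$-equivariant (\S\ref{sec:Hecke-genjac}), the resulting inclusion $\tJ(F)_\Tor \hookrightarrow \tC$ respects the $\T$-action.

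With this $\T$-equivariant inclusion in hand the corollary is formal, since annihilation of a $\T$-module by an ideal is inherited by every $\T$-submodule. In case (NF), Proposition \ref{prop:sc-eisen} (1) states that $\tC$ is annihilated by $\sE'$ (Eisenstein away from $2$) and by $\sE^2$ (Eisenstein exponent two); restricting both annihilations to the submodule $\tJ(F)_\Tor$ yields part (1). In case (FF), Proposition \ref{prop:sc-eisen} (2) states that $\tC$ is annihilated by $\sE$, so the same holds for $\tJ(F)_\Tor$, giving part (2).

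The one step deserving attention is the identification $\ker(\tJ(F) \to J(F)) = \sD_3 \otimes F^\times$ used in the first paragraph, and this is where I expect the only genuine bookkeeping to lie: one must check that the kernel on $F$-points is exactly the toric part and coincides with the kernel of $\tC \to \sC$ in \eqref{eq:ex-ct}, neither larger nor smaller. It follows at once from the long exact cohomology sequence attached to $0 \to T \to \tJ \to J \to 0$, since $T$ is a split torus with cocharacter lattice $\sD_3$ (all cusps being $F$-rational), so that $T(F) = \sD_3 \otimes F^\times$; no further computation is required. Everything else is inheritance of an annihilator along a $\T$-submodule inclusion, so no additional obstacle arises.
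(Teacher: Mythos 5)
Your proof is correct and takes essentially the same route as the paper: the paper deduces the corollary from Proposition \ref{prop:sc-eisen} in one line, by observing that $\sC = J(F)_\Tor$ implies $\tJ(F)_\Tor \subset \tC$ and that annihilation by $\sE'$, ${\sE'}^2$, or $\sE$ passes to $\T$-submodules. Your only addition is to spell out the lifting argument behind that inclusion — identifying $\ker(\tJ(F)\to J(F)) = \sD_3\otimes F^\times$ with the kernel in \eqref{eq:ex-ct} — which is exactly the implicit content of the paper's remark.
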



It remains to prove Proposition \ref{prop:sc-eisen}.
Take $p \nmid N$. 
We use the notations introduced in 
the beginning of \S \ref{sec:Hecke-genjac}.
For any $w \in \W$,
we define two cusps $[w]', [w^*]' \in C'$ on $X'=X_0(pN)$ 
to be the $\Gamma_0(pN)$-orbits of 
$1/m(w)$ and $1/(pm(w)) \in \P^1(F)$, respectively
(see \eqref{eq:w-a-c} for $m(w)$).
We have $f^{-1}([w])=g^{-1}([w])=\{ [w]', [w^*]' \}$.
The action of $\tau_p$ on $L$ is induced by
the direct sum of
\begin{equation}
\phi_w : 
\frac{F(X)^\times_{[w]}}{U_{[w]}^{(1)}}
\overset{f^*}{\to}
\frac{F(X')_{[w]'}^\times}{U_{[w]'}^{(1)}} \oplus 
\frac{F(X')_{[w^*]'}^\times}{U_{[w^*]'}^{(1)}}
\overset{g_*}{\to}
\frac{F(X)^\times_{[w]}}{U_{[w]}^{(1)}}
\end{equation}
over all $w \in \W$.
We remark that 
both of $f$ and $g$ are of degree $|p|+1$
and their ramification indexes 
are given by 
$e_f([w]')=|p|, ~e_f([w^*]')=1$
and 
$e_g([w]')=1, ~e_g([w^*]')=|p|$.
The following is the key step in the proof of
Proposition \ref{prop:sc-eisen}.

\begin{lemma}\label{finallemma}
For any $\alpha \in F(X)^\times_{[w]}$,
we have 
\begin{equation}\label{eq:finallemma} 
\phi_w(\alpha \bmod U_{[w]}^{(1)})
=(-1)^{(|p|+1) \ord_{[w]}(\alpha)} \alpha^{(|p|+1)} \bmod U_{[w]}^{(1)}.
\end{equation}
In particular, we have
\[ \ord_{[w]}(\phi_w(\alpha \bmod U_{[w]}^{(1)}))
=(|p|+1) \ord_{[w]}(\alpha \bmod U_{[w]}^{(1)}).
\]
\end{lemma}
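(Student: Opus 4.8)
The plan is to evaluate $\phi_w=g_*\circ f^*$ place by place. Along the field inclusion $F(X)\hookrightarrow F(X')$ induced by $f$, the pull-back $f^*$ becomes, on completions, the diagonal map sending $\alpha$ to its images in $K_1:=F(X')_{[w]'}$ and $K_2:=F(X')_{[w^*]'}$; dually $g_*$ is the push-forward of functions, which on each local factor is the norm for the extension of $K:=F(X)_{[w]}$ determined by $g$. Thus
\[
\phi_w(\alpha)=\mathrm{N}^{g}_{K_1/K}(\iota_1\alpha)\cdot \mathrm{N}^{g}_{K_2/K}(\iota_2\alpha),
\]
where $\iota_1,\iota_2$ are the two embeddings coming from $f$. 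The ramification data recalled just before the lemma say that via $f$ the place $[w]'$ is totally ramified of degree $|p|$ while $[w^*]'$ is unramified, and via $g$ the two roles are interchanged; all residue fields equal $F$ because the cusps are $F$-rational, so $f^*$ identifies $K_2$ with $K$ and $g_*$ identifies $K_1$ with $K$.

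Since $\phi_w$ is a homomorphism and $F(X)^\times_{[w]}/U^{(1)}_{[w]}\cong F^\times\times\Z$ once a uniformizer $t$ at $[w]$ is fixed, I would only check the formula on a constant $c\in F^\times$ and on $t$. A one-line computation shows the asserted identity is independent of the choice of $t$ (replacing $t$ by $vt$ multiplies both sides by $\bar v^{|p|+1}$, using the constant case), so I am free to take the uniformizer most adapted to the geometry. For a constant, $\iota_j(c)=c$ and the local norms of a constant are $c$ (degree one) and $c^{|p|}$ (degree $|p|$, trivial residue extension), giving $\phi_w(c)=c^{|p|+1}$, which is the formula for $\ord_{[w]}(\alpha)=0$.

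For the uniformizer the order is forced: the factor at $[w]'$ has order $|p|$ (there $g$ is an isomorphism, so its norm merely transports $\iota_1(t)$, which already has order $|p|$ by the ramification of $f$), and the factor at $[w^*]'$ has order one; summing gives $\ord_{[w]}(\phi_w(t))=|p|+1$, and by homomorphy the displayed ``in particular'' statement $\ord_{[w]}(\phi_w(\alpha))=(|p|+1)\ord_{[w]}(\alpha)$. The remaining content is the leading coefficient: the norm of a uniformizer through the totally ramified (via $g$) degree $|p|$ extension $K_2/K$ equals $(-1)^{|p|}$ times the constant term of its Eisenstein minimal polynomial. I would compute this constant term using the $q$-parameters at the cusps, in which $f$ and $g$ are induced by $z\mapsto z$ and $z\mapsto pz$; in case (NF) the extensions are tame, the model $\iota_2(t)^{|p|}=g^*(t)$ with minimal polynomial $x^{|p|}-t$ is essentially exact, and one obtains $\mathrm{N}^{g}_{K_2/K}(\iota_2 t)=(-1)^{|p|+1}t$, whence $\phi_w(t)=(-1)^{|p|+1}t^{|p|+1}$.

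The main obstacle is exactly this sign and leading-coefficient bookkeeping. It is genuinely non-trivial only when $p=2$ in case (NF), and this two-torsion phenomenon is precisely what produces the ``away from $2$'' qualification in Proposition \ref{prop:sc-eisen}. In case (FF) the degree $|p|=q^{\deg p}$ is a power of the residue characteristic, so the ramification is wild and the naive power model $x^{|p|}-t$ is inseparable and must be replaced by the genuine separable local description of the degeneracy map at the Drinfeld cusps; there, however, $(-1)^{|p|+1}=1$ (either $|p|+1$ is even or the characteristic is two), so only the normalization of the leading coefficient to $1$ remains to be verified. Combining the constant and uniformizer cases yields $\phi_w(\alpha)=(-1)^{(|p|+1)\ord_{[w]}(\alpha)}\,\alpha^{|p|+1}$ modulo $U^{(1)}_{[w]}$, as claimed.
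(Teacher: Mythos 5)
Your overall strategy --- reduce to constants and a single uniformizer, then compute $\phi_w=g_*\circ f^*$ as a product of two local norms using the ramification data --- is sound in structure, and your treatment of the constants and of the valuation (the ``in particular'' part) is correct. But the actual content of the lemma, working modulo $U_{[w]}^{(1)}$, is precisely the leading coefficient of $\phi_w(t)$ for a uniformizer $t$, and this is the step your proposal does not carry out. Two units must be pinned down: the image of the constant term of the Eisenstein polynomial of $\iota_2(t)$ for the totally ($g$-)ramified factor $K_2/K$, and the unit by which the isomorphism $(g^*)^{-1}\circ f^*:K\to K_1\to K$ distorts $\iota_1(t)$ before you can say the first factor is $\equiv t^{|p|}$ rather than $c\,t^{|p|}$ for some unknown $c\in F^\times$. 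In (NF) you assert the model $\iota_2(t)^{|p|}=g^*(t)$ is ``essentially exact'' without justification (it does hold for the $q$-parameter at $\infty$, since $g$ is induced by $z\mapsto pz$, but this must be said and then transported to the other cusps and to the other local factor). In (FF) the situation is worse: the extension $K_2/K$ is totally wildly ramified of degree $|p|=q^{\deg p}$, the polynomial $x^{|p|}-t$ is inseparable and cannot be the minimal polynomial, and you explicitly concede that the normalization of the leading coefficient ``remains to be verified.'' Since that verification \emph{is} the lemma, this is a genuine gap rather than an omitted routine detail.

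For comparison, the paper avoids the local splitting entirely. It first reduces to $N=1$ using that the squares relating $X_0(pN)\to X_0(p)$ and $X_0(N)\to X_0(1)$ are Cartesian (so $g_*f^*\pi^*(j)=\pi^*g'_*f'^*(j)$), and to $w=w_\infty$ via Atkin--Lehner equivariance. Then, writing $g=f\circ W_p'$, the element $g_*f^*(j)$ becomes the full norm of $j':=W_p'{}^*(f^*(j))$ along the degree-$(|p|+1)$ extension $F(X')/F(X)$, and this norm is read off from the modular polynomial: $F_p(f^*(j),Y)$ is monic irreducible of degree $|p|+1$, so the norm equals $(-1)^{|p|+1}F_p(j,0)\equiv(-1)^{|p|+1}j^{|p|+1}\bmod U_{[w_\infty]}^{(1)}$, uniformly in (NF) and (FF) (using Bae's Drinfeld modular equation in the latter). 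If you want to salvage your local approach, you would need to supply the explicit uniformizer relations at both cusps of the fiber (via $q$-expansions at the cusps in (NF), and the analogous $t$-expansions for Drinfeld modular curves in (FF)), which amounts to reproving the relevant facts about the modular polynomial by hand.
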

\begin{proof}
Note first that
$F(X)^\times_{[w]}/U_{[w]}^{(1)}$
is generated by the classes of 
non-zero constants $F^\times$ and
a single element $\alpha \in F(X)^\times$
such that $\ord_{[w]}(\alpha) = \pm 1$,
and hence it suffices to verify 
\eqref{eq:finallemma} for such elements.
We consider the map
\[ \Phi_w : F(X)^\times \overset{f^*}{\to}
 F(X')^\times \overset{g_*}{\to} F(X)^\times,
\]
which induces $\phi_w$ in view of
\[ F(X')_{[w]'} \times F(X')_{[w^*]'}
= F(X') \otimes_{F(X)} F(X)_{[w]}.
\]
Since $f$ and $g$ are of degree $|p|+1$,
one has $\Phi_w(\alpha)=\alpha^{|p|+1}$
for a constant $\alpha \in F^\times$,
showing \eqref{eq:finallemma} in this case.

As for the other type of generators,
we first consider the case $w=w_\infty=(-1, \dots, -1)$
(i.e. $[w]=[\infty]$).
Let us consider a diagram
\[
\xymatrix{
X=X(N)  \ar[d]_-\pi &
X'=X_0(pN) \ar[l]_-g \ar[d]_-{\pi'} \ar[r]^-{f} &
X=X_0(N) \ar[d]_-{\pi}
\\
X_0(1) 
& X_0(p) \ar[l]^-{g'} \ar[r]_-{f'}
& X_0(1) 
}
\]
where 
$f'$ and $g'$ are defined similarly as above,
and $\pi$ and $\pi'$ are
given by forgetting level structures (like $f$).
We may take $\alpha:=\pi^*(j)$ as
the pull-back of the $j$-function
$X_0(1) \overset{\cong}{\longrightarrow} \P^1$
so that $\ord_{[w_\infty]}(\alpha)=-1$.
Since the squares in the above diagram are Cartesian,
we have 
\[ \Phi_{w_\infty}(\alpha)
=g_* f^* \pi^*(j)=g_* {\pi'}^* {f'}^*(j)
=\pi^* {g'}_* {f'}^*(j).
\]
Therefore \eqref{eq:finallemma} 
is reduced to the case $N=1$, which we now assume.
Let $W_p' : X'=X_0(p) \to X'$ be the Atkin-Lehner involution 
with respect to $p$ so that
we have $g=f \circ W_p'$
and therefore
$\Phi_{w_\infty}(j)=g_* f^*(j)$
is given by the norm of $j':=W_p'(f^*(j)) \in F(X')^\times$ 
with respect to the field extension $F(X')/F(X)=F(j)$ along $f$.
Let $F_p(X, Y) \in A[X, Y]$ be the modular polynomial attached to $p$
so that we have
$F_p(f^*(j), j')=0$
and
$F_p(f^*(j), Y)$ is
an irreducible, monic and of degree $|p|+1$ in $\C_\infty(f^*(j))[Y]$.
(cf.\ \cite[Chapter 5, \S 2, Theorem 3]{Lang}, \cite[Theorem 2.4]{Bae}).
Therefore we have
\[
\Phi_{w_\infty}(j)=(-1)^{|p|+1}F_p(j, 0) \equiv
(-1)^{|p|+1} j^{|p|+1} \mod U_{[w_\infty]}^{(1)}.
\]
This proves \eqref{eq:finallemma} in this case.

Finally, 
let us go back to general $N$
and consider other $w \in \W$.
Then we may take $\alpha := W_w(j)$.
Using the Atkin-Lehner involution
$W_w' : X' \to X'$ acting on $X'$,
we have a commutative diagram
\[
\xymatrix{
X  \ar[d]_-{W_w} &
X' \ar[l]_-g \ar[r]^-{f} \ar[d]_-{W_w'} &
X \ar[d]_-{W_w}
\\
X   &
X' \ar[l]_-g \ar[r]^-{f} &
X.
}
\]
Hence \eqref{eq:finallemma} follows 
from the case $w=w_\infty$ above.
\end{proof}

\begin{proof}[Proof of Proposition \ref{prop:sc-eisen}]
The above lemma shows
everything except the second statement of (1),
which we now prove. 
Hence we suppose that we are in the case (NF) and $s=1, p=2$.
Then any element of $L^0$ is 
represented by
$(\alpha, \beta) \in F(X)_{[0]}^\times \oplus F_(X)_{[\infty]}^\times$
such that $\ord_{[0]}(\alpha)=-\ord_{[\infty]}(\beta)=:a$.
By the lemma, we see that
$\widetilde{T}_2$ sends it to the class of
$((-1)^a \alpha^3, (-1)^a \beta^3)$,
but the image of $(-1, -1)$ in $\tC$ is trivial
because it is in the image of the diagonal map.
This completes the proof.
\end{proof}

\begin{remark}
In the case (NF), we actually obtain that $\widetilde{J}(F)_\Tor$ is never Eisenstein when $N$ is odd and $s >2$.
To prove this, let $c \in \sC$ be the divisor class represented by
$$ [1]-[1/p_1] - [p_1/N] + [1/N] \in \sD_2.$$ 
By Corollary \ref{cor: ker}, we have $\delta(c) = 0$.
From the exact sequence \eqref{eq:delta}, there exists $\widetilde{c} \in \widetilde{J}(F)_\Tor$ whose image in $\widetilde{J}(F)_\Tor$ is $c$.
We shall show that $\widetilde{c}$ is not annihilated by $\widetilde{T}_2-3$.
Take $\xi \in L^0$ so that 
the surjective map $L^0 \twoheadrightarrow \sD_2$
sends $\xi$ to $[1]-[p_1] - [p_1/N] + [1/N]$.
Here we identify $\W$ with $\{m \in A_+ \mid m|N\}$ as in \eqref{eq:w-a-c}.
Let $\widetilde{c}' \in \widetilde{C} \subset \widetilde{J}(F)$ be the image of $\xi$ under the map $L^0 \twoheadrightarrow \widetilde{C}$. 
Then the canonical map from $\widetilde{J}(F)$ to $J(F)$ also sends $\widetilde{c}'$ to $c$.
From the exact sequence
$$
1 \rightarrow \sD_3 \otimes F^\times \rightarrow \widetilde{J}(F) \rightarrow J(F) \rightarrow 0,
$$
we may identify $\widetilde{c}-\widetilde{c}'$ with an element in $\sD_3 \otimes F^\times$, which is Eisenstein. However, Lemma \ref{finallemma} indicates that $\widetilde{c}'$ cannot be killed by $\widetilde{T}_2 - 3$ when $s>2$. 
(The action of $\widetilde{T}_2$ on
the components $[1], [1/p_1], [p_1/N], [1/N]$ 
is given by $x \mapsto -x^3$,
while on other components like $[1/p_2]$ it is given by $x \mapsto x^3$.)
Therefore neither is $\widetilde{c}$.
\end{remark}

\begin{remark}
Let $\T_{\widetilde{J}}$ (resp.\ $\mathcal{E}_{\widetilde{J}}$) be the image of $\T$ (resp.\ $\mathcal{E}$) in $\End(\widetilde{J})$. Then the Eisenstein property of $\sD_3 \otimes \G_m \subset \widetilde{J}$ implies that
$$\T/ \mathcal{E} \cong \T_{\widetilde{J}}/\mathcal{E}_{\widetilde{J}} \cong \Z,$$
unless $N=1$ (in which case we have $\widetilde{J}=J=0$).
Let $\T_J$ (resp.\ $\mathcal{E}_J$) be the image of $\T$ (resp.\ $\mathcal{E}$) in $\End(J)$. It is known that $\T_J/\mathcal{E}_J$ is always finite. 
This is a remarkable difference between $\T_{\widetilde{J}}$ and $\T_J$.
\end{remark}

\noindent
{\bf Acknowledgment.}
We thank the referee for careful reading
which helped improving our manuscript.

\bibliographystyle{plain}

\end{document}